\DeclareMathOperator{\Span}{span}
\crefname{hypothesis}{Hypothesis}{Hypotheses}
\crefname{fact}{Fact}{Facts}
\title{Enforcing boundary conditions for physics-informed neural operators
\thanks{This work was funded by the Deutsche Forschungsgemeinschaft (DFG) within CRC 1615: SMART Reactors for Future Process Engineering -- 503850735.}}
\author{Niklas Göschel\thanks{Chair Computational Mathematics, Institute of Mathematics, Hamburg University of Technology, 21073 Hamburg, Germany
  (\email{niklas.goeschel@tuhh.de}, \email{sebastian.goetschel@tuhh.de}, \email{ruprecht@tuhh.de}).}
\and Sebastian Götschel\footnotemark[2]
\and Daniel Ruprecht\footnotemark[2]
}
\begin{document}

\maketitle

\begin{abstract}
Machine-learning based methods like physics-informed neural networks and physics-informed neural operators are becoming increasingly adept at solving even complex systems of partial differential equations.
Boundary conditions can be enforced either weakly by penalizing deviations in the loss function or strongly by training a solution structure that inherently matches the prescribed values and derivatives.
The former approach is easy to implement but the latter can provide benefits with respect to accuracy and training times.
However, previous approaches to strongly enforcing Neumann or Robin boundary conditions require a domain with a fully $C^1$ boundary and, as we demonstrate, can lead to instability if those boundary conditions are posed on a segment of the boundary that is piecewise $C^1$ but only $C^0$ globally.
We introduce a generalization of the approach by Sukumar \& Srivastava (doi: 10.1016/j.cma.2021.114333),  and a new approach based on orthogonal projections that overcome this limitation.
The performance of these new techniques is compared against weakly and semi-weakly enforced boundary conditions for the scalar Darcy flow equation and the stationary Navier-Stokes equations.
\end{abstract}

\begin{keywords}
physics-informed neural operators, physics-informed neural networks, boundary conditions, approximate distance function, hard constraints
\end{keywords}


\section{Introduction}\label{sec:intro}
Various machine learning-based techniques have been applied successfully to solve systems of partial differential equations (PDEs).
Most prominently, physics-informed neural networds~\cite{Raissi19} and many variants and extensions~\cite{Raissi24} learn the solution to a PDE. 
They use (often dense) neural networks combined with loss functions that include the PDE residuals to promote physically meaningful solutions and reduce the amount of training data required.
Neural operators~\cite{li20a, Lu21a} were developed to learn solution operators of PDEs.
Based on the Fourier neural operator (FNO) by~\cite{li20b}, a physics-informed neural operator (PINO) was proposed by~\cite{li21}.
Similar to PINNs it was designed to approximate the solution operator of a parametric  partial differential equation by minimizing  a residual given by the differential equation instead of training solely on labeled training data.

In physics-informed machine learning, boundary conditions can be enforced in two ways. 
One is to weakly enforce them by adding a residual term that punishes but does not prohibit differences to the prescribed values.
An extension are penalty methods to treat boundary conditions as hard constraints in the optimization~\cite{Lu21b}.
However,~\cite{Toscano25,Zeinhofer24} show that these approaches weaken the decay of the generalization error.
The alternative is to strongly enforce boundary conditions by constructing the solution in a way that it exactly satisfies the boundary conditions. 
While this is straightforward for Dirichlet boundary conditions~\cite{Berrone23,Toscano25}, Neumann- or Robin boundary conditions are harder to treat.
Techniques to do this include Fourier feature embeddings~\cite{Straub25} or solution structures based on trial solutions using distance functions~\cite{Manavi24,McFall09}.
Based on the latter idea,~\cite{Sukumar21} introduce a flexible method to strongly enforce boundary conditions and observe that it can improve accuracy for PINNs.

\paragraph{Contributions and structure of the paper}
We discuss prescribing boundary conditions in the setting of physics-informed neural operators and -networkd in some detail in\S~\ref{sec:related_work}, and summarize the approach by Sukumar and Srivastava~\cite{Sukumar21}. We will show that their approach is suitable for a certain class of boundary conditions but fails when Neumann conditions are prescribed on a boundary that is not $C^1$. 
We propose two novel approaches to prescribe Robin or Neumann boundary conditions on boundaries that are piecewise $C^1$ but only $C^0$ globally in \S~\ref{sec:methodology}.
First we describe a generalization of the method by~\cite{Sukumar21} called \emph{generalized local solution structures} or GLSS.
The second approach is based on \emph{orthogonal projections} and we refer to it as OP.
While it requires certain assumptions on the shape of the boundary, it has fewer unknown functions that need to be learned by the network.
GLSS and OP are described in \S~\ref{subsec:pde_scalar} for scalar PDEs and in \S~\ref{subsec:pde_systems} for systems of PDEs. We provide algorithms for the generation of these solution structures in \S~\ref{sec:algorithms}. Numerical results are discussed in \S~\ref{sec:examples}.
\S~\ref{subsec:darcyy} compares GLSS, OP as well as weakly and semi-weakly boundary conditions for a scalar PDE, the Darcy flow equation.
Finally, \S~\ref{subsec:nse} compares their performance for a standard benchmark from computational fluid dynamics by~\cite{Turek96} that requires solving the stationary Navier-Stokes equations to model flow around a cylinder.

The Python code for the implementation and all examples can be found at~\url{https://zenodo.org/records/17453006}.
%

\section{Background}\label{sec:related_work}
In this section we discuss approaches to prescibe boundary conditions in physics-informed machine learning, revisit the approach by Sukumar and Srivastava~\cite{Sukumar21}, and illustrate the arising issues.

Here and in the remainder of the article, let $\Omega \subset \mathbb{R}^2$ be a computational domain, $\mathcal{U}, \mathcal{V}$ Banach spaces, and let $\mathcal{A} \subset \mathcal{V}$ be a set of parameter functions.
For a $\boldsymbol{a} \in \mathcal{A}$ we want to find the solution $\boldsymbol{u} \in\mathcal{U}$ to the boundary value problem
\begin{subequations}
	\label{eq:pde_plus_bc}
	\begin{align}
		\forall \boldsymbol{x} & \in \Omega : & \mathcal{P} (\boldsymbol{u} (\boldsymbol{x}),\boldsymbol{a} (\boldsymbol{x})) & = 0, \label{eq:pde} \\
		\forall \boldsymbol{x} & \in \partial \Omega : & \mathcal{B} (\boldsymbol{u} (\boldsymbol{x}),\boldsymbol{a} (\boldsymbol{x})) & = 0, \label{eq:bc}
	\end{align}
\end{subequations}
where $\mathcal{P}$ is a differential operator and $\mathcal{B}$ is a boundary condition operator.

\subsection{Physics-informed neural networks and physics-informed neural operators}
For a physics-informed neural operator (PINO), the aim is to learn the solution operator $\mathcal{G}_{\boldsymbol{\theta}} : \mathcal{A} \rightarrow \mathcal{U}$ with $\mathcal{G}_{\boldsymbol{\theta}}(\boldsymbol{a} ) = \boldsymbol{u}$ using a set of training parameters $\mathcal{A}_{\text{train}} \subset \mathcal{A}$. 
We denote the trainable parameters of the neural operator as $\boldsymbol{\theta}$.
Note that~\cite{Sukumar21} present their approach in the context of physics-informed neural network (PINNs) whereas we consider PINOs.
However, in the notation above, a PINN learns $u_\theta(\boldsymbol{a^\star}) = G_{\theta}(\boldsymbol{a^\star})$ for a \emph{fixed} parameter $\boldsymbol{a^\star}$.
That is, it learns one specific instance $G_{\theta}(\boldsymbol{a^\star})$ solving the boundary problem~\eqref{eq:pde}.   
By contrast, a PINO trains on a large set of parameters to learn to mapping $\boldsymbol{a} \mapsto G_{\theta}(\boldsymbol{a})$. 
The learned solution operator can be further refined to compute the solution for a specific $\boldsymbol{a^\star}$ through continued training only on this parameter (\emph{finetuning}).

In the following, we use the FNO-PINO architecture to learn not a mapping to the PDE solution directly, but to the unknown functions in the \emph{solution structure} (Def.~\ref{def:solution_structure}). 
For simplicity, we utilize the FNO framework even though it requires a rectangular domain with a uniform mesh to use the Fast Fourier Transform (FFT).
To work on more complex geometries we follow the approach by~\cite{lu21} and choose the minimum bounding box of the underlying domain as computational domain. 
Alternatively, for more efficient approaches one could utilize the geo-FNO framework proposed by~\cite{li22} or geometry-informed neural operators~\cite{Li23}.

With slight abuse of notation we will refer to the output of the neural networks still as $\mathcal{G}_{\boldsymbol{\theta}}(\boldsymbol{a})$. 
Our approaches to enforce boundary conditions can be used for either PINN or PINO. 
To illustrate applicability to PINNs, in addition to the regular training and finetuning of the PINO, we consider \emph{PINN-like training}, i.\,e. learning the solution structure for a specific parameter $\boldsymbol{a^\star}$ without training the solution operator first.
This approach is essentially a PINN that uses the FNO architecture instead of dense layers.
As we focus on boundary conditions, we do not consider data mismatch terms in the loss, but only physics losses.
There are three ways to ensure that the trained solution $\boldsymbol{u}$ satisfies the boundary condition~\eqref{eq:pde_plus_bc}.
\paragraph{Weak boundary conditions}
Here, the neural operator outputs the solution directly, that is $\boldsymbol{u} = \mathcal{G}_{\boldsymbol{\theta}}(\boldsymbol{a})$, and satisfying the boundary conditions has to be learned during training.
This corresponds to solving the minimization problem
\begin{equation}
	\min_{\boldsymbol{\theta}} \sum_{\boldsymbol{a} \in \mathcal{A}_{\text{train}}}
	\left(
	\int_{\Omega} \mathcal{P} ((\mathcal{G}_{\boldsymbol{\theta}} (\boldsymbol{a})) (\boldsymbol{x}),\boldsymbol{a} (\boldsymbol{x}))^2 d\boldsymbol{x} 
	+ \int_{\partial \Omega} \mathcal{B} ((\mathcal{G}_{\boldsymbol{\theta}} (\boldsymbol{a})) (\boldsymbol{x}),\boldsymbol{a} (\boldsymbol{x}))^2 dS(\boldsymbol{x})
	\right).
\end{equation}
\paragraph{Exact boundary conditions} 

The idea by~\cite{Sukumar21} and~\cite{Rvachev95} is to train suitable functions $\Psi_i$ such that $\tilde{\boldsymbol{u}}(\Psi_1,\dots,\Psi_I)$ minimizes the PDE-loss from~\eqref{eq:pde} and, by construction, satisfies the boundary condition exactly.

\begin{definition}[Solution structure]\label{def:solution_structure}
	We call $\tilde{\boldsymbol{u}} : \{\bar{\Omega} \rightarrow \mathbb{R}^{I}\} \rightarrow \{\bar{\Omega} \rightarrow \mathbb{R}^{n}\}$ a solution structure, if 
	$\tilde{\boldsymbol{u}}(\Psi_1,\dots,\Psi_I)$ satisfies~\eqref{eq:bc} for any differentiable functions $\Psi_1, \dots, \Psi_I : \bar{\Omega} \rightarrow \mathbb{R}$.
\end{definition}

For exact boundary conditions, such a solution structure is constructed, and functions $\Psi_i$ trained such that the parameters satisfy
\begin{equation}
	\min_{\boldsymbol{\theta}} \sum_{\boldsymbol{a} \in \mathcal{A}_{\text{train}}}
	\int_{\Omega} \mathcal{P} (\tilde{\boldsymbol{u}}(\mathcal{G}_{\boldsymbol{\theta}} (\boldsymbol{a})) (\boldsymbol{x}),\boldsymbol{a} (\boldsymbol{x}))^2 d\boldsymbol{x}.
\end{equation}
for the resulting $\tilde{\boldsymbol{u}}(\Psi_1,\dots,\Psi_I)$. 
\paragraph{Semi-weak / semi-exact boundary conditions} 
Lastly we consider a solution structure that satisfies some boundary conditions but not all.
Let $\tilde{\mathcal{B}}$ represent the boundary conditions that $\tilde{\boldsymbol{u}}(\Psi_1,\dots,\Psi_I)$ does not autoamtically satisfy.
The minimization problem solved in training is then posed as a combination of the two previous ones 
\begin{equation}
	\min_{\boldsymbol{\theta}} \sum_{\boldsymbol{a} \in \mathcal{A}_{\text{train}}}
	\left(
	\int_{\Omega} \mathcal{P} (\tilde{\boldsymbol{u}}(\mathcal{G}_{\boldsymbol{\theta}} (\boldsymbol{a})) (\boldsymbol{x}),\boldsymbol{a} (\boldsymbol{x}))^2 d\boldsymbol{x}
	+ \int_{\partial \Omega} \tilde{\mathcal{B}} ((\mathcal{G}_{\boldsymbol{\theta}} (\boldsymbol{a})) (\boldsymbol{x}),\boldsymbol{a} (\boldsymbol{x}))^2 dS(\boldsymbol{x})
	\right).
\end{equation}
We will later use this approach to enforce Dirichlet boundary conditions exactly and Robin conditions weakly as a comparison.

\subsection{Approximate distance functions and solution structures}

In their foundational work, Sukumar and Srivastava~\cite{Sukumar21} utilize the theory of R-functions~\cite{Rvachev95} and approximate distance functions to train solutions to boundary value problems using PINNs.
An approximate distance function satisfies two properties: it has to be a \emph{distance function}, as well as \emph{normalized} with respect to the boundary. More precisely, we have the following definitions:
\begin{definition}[Distance function]
  \label{def_distance_function}
  Let $\Gamma \subset \partial \Omega$ be a boundary section.
  We call a function $\phi : \bar{\Omega} \rightarrow \mathbb{R}$ a \emph{distance function to $\Gamma$} if
  it is zero on $\Gamma$ and positive in $\bar{\Omega} \setminus \Gamma$.
\end{definition}
The other definition is according to~\cite{Rvachev95}.
\begin{definition}[Normalized function]
  \label{def_normalized_function}
  Let $\Gamma \subset \partial \Omega$ be a boundary section.
  We call a function $\bar{\phi} : \bar{\Omega} \rightarrow \mathbb{R}$ 
  a \emph{normalized function with respect to $\Gamma$} if it satisfies $\bar{\phi} \equiv 0$ and $ \frac{\partial \bar{\phi}}{\partial \boldsymbol{\nu}} \equiv 1$ on $\Gamma$.
\end{definition} 
Here, $\boldsymbol{\nu}$ denotes the inward pointing normal vector on $\partial \Omega$.
\begin{definition}[Approximate distance function]
  Let $\Gamma \subset \partial \Omega$ be a boundary section.
  We call a function $\phi : \bar{\Omega} \rightarrow \mathbb{R}$ an \emph{approximate distance function to $\Gamma$}, if $\phi$ is both a distance function to $\Gamma$ in the sense of Definition~\ref{def_distance_function} and normalized with respect to $\Gamma$ in the sense of Definition~\ref{def_normalized_function}.
\end{definition}

With these, solution structures can be constructed. 
We illustrate Sukumar and Srivastava's approach~\cite{Sukumar21} first for a Dirichlet boundary condition
\begin{equation}
	\label{eq:dirichlet-bc}
	\forall \boldsymbol{x} \in \partial \Omega : \quad u(\boldsymbol{x}) = g(\boldsymbol{x}),
\end{equation}
where $\Omega$ is the domain on which the corresponding differential equation is posed.
Let $\phi$ be the approximate distance function to $\partial \Omega$. 
Then, the solution is given by
\begin{equation}
	\label{solution_structure_one_Dirichlet_segment}
	u(\boldsymbol{x}) = g(\boldsymbol{x}) + \phi (\boldsymbol{x}) \Psi (\boldsymbol{x}).
\end{equation}
The function $\Psi$ is learned by the neural network and, because $\phi(\boldsymbol{x}) = 0$ on $\partial \Omega$, $ u(\boldsymbol{\boldsymbol{x}})$ satisfies the boundary condition~\eqref{eq:dirichlet-bc} by construction.

For a Robin boundary condition 
\begin{equation}
	\label{eq:robin-bc}
	\forall \boldsymbol{x} \in \partial \Omega : \quad \frac{\partial u(\boldsymbol{x})}{\partial \boldsymbol{n}} + c(\boldsymbol{x}) u(\boldsymbol{x}) = h(\boldsymbol{x}),
\end{equation}
the solution becomes
\begin{equation}
	\label{solution_structure_one_Robin_segment}
	u (\boldsymbol{x}) = 
	\underbrace{\Psi_{1} (\boldsymbol{x}) - \phi(\boldsymbol{x}) \nabla \phi(\boldsymbol{x}) \cdot \nabla \Psi_{1} (\boldsymbol{x})}_{\text{boundary value}}
	+ \underbrace{\phi(\boldsymbol{x}) \left( c(\boldsymbol{x}) \Psi_{1} (\boldsymbol{x}) - h(\boldsymbol{x}) \right)}_{\text{boundary derivative}}
	+ \underbrace{\phi (\boldsymbol{x})^2 \Psi_2 (\boldsymbol{x})}_{\text{remainder term}}
\end{equation}
It now depends on two functions $\Psi_1$ and $\Psi_2$ that the network needs to learn.
The first term determines the value on the boundary but has a normal derivative of zero whereas the second term is zero on the boundary but determines the normal derivative on the boundary.
If $\phi$ is normalized with respect to $\partial \Gamma$, the Robin condition~\eqref{eq:robin-bc} is satisfied. 
Because of $\phi^2$, the remainder term influences neither the value nor the derivative on at the boundary.

This approach can be extended to the case where both Dirichlet and Robin boundary conditions are prescribed on different boundary segments $\Gamma_1, \dots, \Gamma_M$.
We assume that each $\Gamma_i$ is $C^1$, but the overall boundary $\partial \Omega = \bigcup_{i=1}^{M} \Gamma_i$ need only be in $C^0$. \footnote{Note that this could be relaxed to $\partial \Omega \supset \bigcup_{i=1}^{M} \Gamma_i$ which would, for example, allow to treat initial-boundary value problems where the now boundary condition cannot be prescribed at final time. However, this case is not considered here.} 
The boundary conditions are
\begin{subequations}
	\label{eq:bc-appendix}
	\begin{align}
		\forall i & \in I_D : \forall \boldsymbol{x} \in \Gamma_i : & 
		u (\boldsymbol{x}) & = g_i (\boldsymbol{x}),  \label{eq:dirichlet_bc} \\
		\forall i & \in I_R : \forall \boldsymbol{x} \in \Gamma_i : & 
		\frac{\partial u (\boldsymbol{x})}{\partial \boldsymbol{n}} + c_i(\boldsymbol{x}) u (\boldsymbol{x}) & = h_i (\boldsymbol{x}). \label{eq:robin_bc}
	\end{align}
\end{subequations}
We consider Neumann boundary conditions as a special case of Robin conditions with $c_i \equiv 0$.

In \cite{Sukumar21}'s approach, the local solution structures $u_i$ are chosen as
\begin{equation}
	\label{local_solution_structure_Sukumar_ansatz}
	u_i (\boldsymbol{x}) = \begin{cases}
		g_i(\boldsymbol{x}), & i \in I_D \\
		\Psi_{i} (\boldsymbol{x}) - \phi_i(\boldsymbol{x}) \nabla \phi_i(\boldsymbol{x}) \cdot \nabla \Psi_{i} (\boldsymbol{x}) + \phi_i (\boldsymbol{x}) \left( c_i(\boldsymbol{x}) \Psi_i (\boldsymbol{x}) - h_i (\boldsymbol{x}) \right), & i \in I_R  
	\end{cases}.
\end{equation}
\begin{proposition}
	We have $u (\boldsymbol{x})  = u_i (\boldsymbol{x})$ for any $1 \leq i \leq M$ and any $\boldsymbol{x} \in \Gamma_i$.
\end{proposition}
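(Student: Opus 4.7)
The plan is to exploit the global construction implicit in the Sukumar–Srivastava framework, in which the global solution $u$ is assembled from the local structures $u_i$ by a Rvachev-style transfinite interpolation
\begin{equation*}
    u(\boldsymbol{x}) \;=\; \sum_{i=1}^{M} w_i(\boldsymbol{x})\, u_i(\boldsymbol{x}),
\end{equation*}
where the weights $w_i$ are built from the approximate distance functions $\phi_1,\dots,\phi_M$ and designed so that $\{w_i\}$ is a partition of unity on $\bar{\Omega}$ with $w_j \equiv \delta_{ij}$ on the interior of each $\Gamma_i$. The task then reduces to a statement about the weights rather than about the $u_i$ themselves.

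First I would fix an index $i$ and restrict attention to the relative interior of $\Gamma_i$, i.e.\ $\boldsymbol{x} \in \Gamma_i$ with $\boldsymbol{x} \notin \Gamma_j$ for every $j \neq i$. By Definition~\ref{def_distance_function} we then have $\phi_i(\boldsymbol{x}) = 0$ and $\phi_j(\boldsymbol{x}) > 0$ for all $j \neq i$. The standard quotient expression for the weights has the property that every term corresponding to $k \neq i$ is multiplied by $\phi_i$, so those terms vanish in both numerator and denominator, while the $k=i$ term remains strictly positive. This yields $w_i(\boldsymbol{x}) = 1$ and $w_k(\boldsymbol{x}) = 0$ for $k \neq i$, so the sum collapses to $u(\boldsymbol{x}) = u_i(\boldsymbol{x})$ on the relative interior of $\Gamma_i$.

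Finally, I would extend the identity to junction points $\boldsymbol{x} \in \Gamma_i \cap \overline{\Gamma_j}$ for some $j \neq i$ by a continuity argument. At such points several $\phi_k$ vanish simultaneously and the weight formula is of indeterminate form $0/0$; however, the normalization condition $\partial \phi_k / \partial \boldsymbol{\nu} \equiv 1$ on $\Gamma_k$ from Definition~\ref{def_normalized_function} forces each $\phi_k$ to vanish to exactly first order in the distance to $\Gamma_k$. Matching the leading-order behavior of numerator and denominator along any admissible approach direction from $\Omega$ lets one extend $w_i$ continuously by $w_i = 1$ at a junction of $\Gamma_i$, so the identity $u = u_i$ propagates to all of $\Gamma_i$.

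The main obstacle will be precisely these junction points, where the boundary is only $C^0$ and several distance functions vanish together. This is the same phenomenon the paper flags as problematic in the Neumann case, and the argument cannot rely solely on the vanishing of $\phi_i$; it also needs the normalization in Definition~\ref{def_normalized_function} to control the lower-order terms in the limit. Away from the junctions, the identity is a direct consequence of the defining property of distance functions.
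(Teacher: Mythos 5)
Your core argument is the same one the paper uses (it is written out explicitly only for the system analogue, Proposition~\ref{prop:bc-system}): at a point of the relative interior of $\Gamma_i$ we have $\phi_i(\boldsymbol{x})=0$ and $\phi_j(\boldsymbol{x})>0$ for $j\neq i$, so every term of the weight denominator with $k\neq i$ carries the factor $\phi_i(\boldsymbol{x})^{\mu_i}=0$, giving $w_i(\boldsymbol{x})=1$ and $w_k(\boldsymbol{x})=0$ otherwise. One omission: the solution structure is not only the transfinite interpolant $\sum_i w_i u_i$ but, as in \eqref{eq:generic_solution_structure_scalar}, also contains the domain term $\Psi(\boldsymbol{x})\prod_k\phi_k(\boldsymbol{x})^{\mu_k}$; you must add the (one-line) observation that this term vanishes on $\Gamma_i$ because it too contains the factor $\phi_i^{\mu_i}$. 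Without that the reduction ``to a statement about the weights'' is not quite the whole claim.

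The junction-point step, however, is genuinely flawed. At a common point $P\in\Gamma_i\cap\Gamma_j$ the weights are of the indeterminate form $0/0$ and their limits are \emph{direction dependent}: along $\Gamma_i$ one gets $w_i\to 1$, along $\Gamma_j$ one gets $w_i\to 0$, and along an interior bisector (for $\mu_i=\mu_j$) one gets $w_i\to 1/2$. So $w_i$ does not extend continuously by the value $1$ at $P$ ``along any admissible approach direction,'' and $u$ itself is in general not continuous at $P$ --- this is exactly the phenomenon the paper exploits to motivate \eqref{eq:Psi_i_GLSS_A_B}: continuity at $P$ requires $u_i(P)=u_j(P)$, which fails when a learned $\Psi_i$ enters the local structure. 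The normalization $\partial\phi_k/\partial\boldsymbol{\nu}=1$ does not repair this, since the direction dependence persists (and for Robin segments the exponent $\mu_k=2$ changes the leading order anyway). The correct reading at a junction is only that the limit of $u$ along $\Gamma_i$ equals $u_i(P)$, as noted following \cite{Rvachev01}; the paper's proof works pointwise at those $\boldsymbol{x}\in\Gamma_i$ where the formula is defined and makes no claim of a continuous extension of the weights, so you should either restrict the statement accordingly or argue via the tangential limit rather than via $w_i\equiv 1$ at $P$.
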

Therefore, for $i \in I_D$ the Dirichlet boundary condition~\eqref{eq:dirichlet_bc} is satisfied by construction.
\begin{definition}[Distance Function with vanishing gradient]
	\label{def:vanishing_gradients}
	Let $\phi$ be a distance function to $\Gamma$. We say that $\phi$ has a \emph{vanishing gradient} if $\nabla \phi(\boldsymbol{x}) = 0$ for $\boldsymbol{x} \in \Gamma$.
\end{definition}
If the approximate distance functions corresponding the the Robin boundary segments have vanishing gradient, the boundary condition is satisfied by construction.
However, as pointed out by \cite{Gladstone22}, the non-differentiability of the approximate distance function becomes an issue for Neumann or Robin boundary value on boundaries that are not globally $C^1$.

\subsection{The issue with non-$C^1$ approximate distance functions}\label{subsec:issue}

As pointed out already by~\cite{Sukumar21}, suitable approximate distance functions should be $C^1$ in order to allow for Robin- or Neumann boundary conditions, as derivatives are required in the solution structure. Moreover, for collocation-based methods like PINNs, even higher regularity is needed, as otherwise the Laplacian of the distance function is unbounded at points where the distance function is only $C^0$, which causes issue when solving second-order differential equations.

To illustrate the issue that can arise if $\partial \Omega \notin C^1$, consider a simple Poisson problem with homogeneous Neumann boundary condition
\begin{equation}
  \forall (x,y)  \in (0,1)^2 : \quad
  - \Delta u (x,y)  = 2 \pi^2 \cos(\pi x) \cos(\pi y).
\end{equation}
Figure~\ref{fig:test_neumann} shows resulting solution (middle) as well as the analytical solution (left) and the solution using our generalized approach presented in this paper (right) for comparison.
The issue in the middle figure stems from the emergence of instabilities in the corner of the Laplacian of the approximate distance function, cf.~\cite[Figure 27]{Sukumar21}.

\begin{figure}[th]
	\centering
  \begin{subfigure}[t]{.32\textwidth}
    \includegraphics[width=\textwidth]{./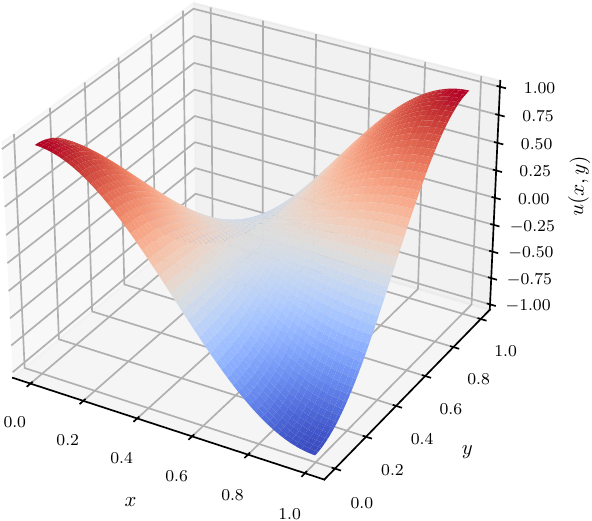}
    \caption{Analytical}
  \end{subfigure}
  \begin{subfigure}[t]{.32\textwidth}
    \includegraphics[width=\textwidth]{./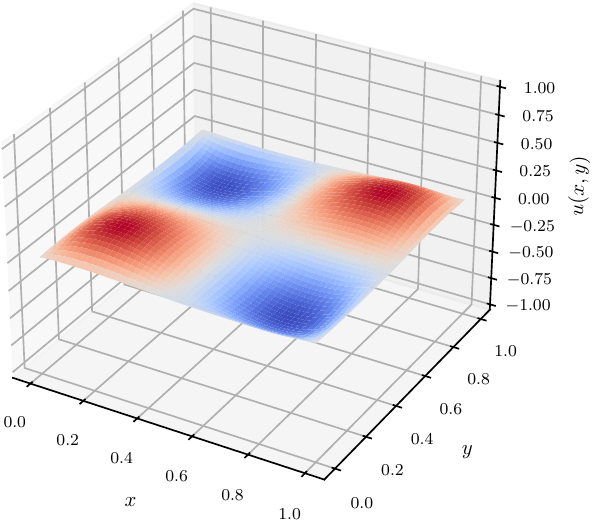}
    \caption{\cite{Sukumar21}}
  \end{subfigure} 
  \begin{subfigure}[t]{.32\textwidth}
    \includegraphics[width=\textwidth]{./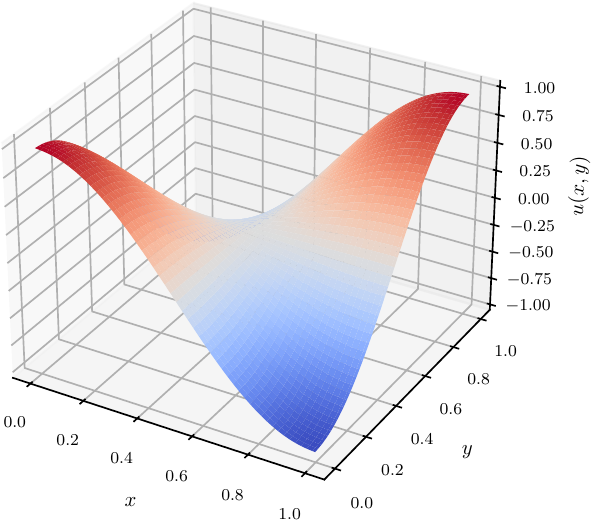}
    \caption{GLSS (our approach)}
  \end{subfigure}
	\caption{Analytical solution (left) and solution with~\cite{Sukumar21}'s (middle) and our generalized approach (right) to strongly enforcing boundary conditions.}
	\label{fig:test_neumann}
\end{figure}

While the exact distance function $d(\boldsymbol{x}) := \min_{\tilde{\boldsymbol{x}} \in \partial \Omega} ||\boldsymbol{x} - \tilde{\boldsymbol{x}}||$ is an approximate distance function to $\partial \Omega$, it is in general not $C^1$.
The construction of approximate distance functions to boundaries that consist of piecewise linear segments which are only $C^0$ globally is discussed in \cite{Sukumar21}.
However, their approximate distance functions are $C^1$ in the interior but not at the joining points of the segments.
This is not problem for the Dirichlet conditions or Neumann boundary conditions on the annulus they consider, where the boundary is globally $C^1$.

For many relevant cases, it is easy to show that $C^1$-regular approximate distance functions cannot exist. Consider, for sake of a simple example, the L-shaped domain~\ref{fig:L_shape}. Assume we could find a normalized distance function $\phi$ for $\Gamma_4$ that is $C^1$ everywhere, including in the point $D$. In this case, we could expand $\phi$ as $\phi(D + t(-1,1)^T) = \phi(D) + t(-1,1) \nabla \phi(D) + o( || t(-1,1)^T || )$, for $t\in \mathbb{R}$. We have  $\phi (D) = 0$ since $\phi$ is a distance function, and  $\nabla \phi (D) = (0,-1)^T$, as $\phi$ is normalized with respect to $\Gamma_4$. We thus have $\phi (D + t(-1,1)^T) = 0 + t(-1,1) (0,-1)^T + \mathcal{o} ( || t(-1,1)^T || ) = -t + \mathcal{o} ( | t | )$. For $t$ small enough this yields $| \phi(D + t(-1,1)^T) + t | < |t|$. However, if $t$ is small enough such that $D + t(-1,1)^T \in \Omega$ we have $\phi (D + t(-1,1)^T) > 0$. Therefore, $| \phi_4 (D + t(-1,1)^T) + t | >|t|$, which is a contradiction.

\section{Methodology}\label{sec:methodology}
To overcome the limitations discussed in \S~\ref{subsec:issue}, we relax the requirement of using approximate distance functions in the solution structures. We describe our approach first for scalar partial differential equations, and then generalizing it to systems of PDEs.
\subsection{Exact boundary conditions for scalar differential equations}\label{subsec:pde_scalar}

Starting from~\cite{Sukumar21}, the solution consists of two parts, the transfinite interpolant by~\cite{Rvachev01} for the boundary and a remainder term in the domain 
\begin{align}
\label{eq:generic_solution_structure_scalar}
& u (\boldsymbol{x}) = \underbrace{\sum_{i=1}^{M} w_i (\boldsymbol{x}) u_i (\boldsymbol{x})}_{\text{boundary}}
\enspace + \enspace \underbrace{\Psi (\boldsymbol{x}) \prod_{i=1}^{M} \phi_i (\boldsymbol{x})^{\mu_i}}_{\text{domain}}, \\
\label{generic_solution_structure_scalar_weight_functions}
\forall i \in \{1,\dots,M\} : \quad &
w_i (\boldsymbol{x}) = \frac{ \prod_{j=1, j \neq i}^M \phi_j (\boldsymbol{x})^{\mu_j} }{ \sum_{k=1}^{M} \prod_{j=1, j \neq i}^M \phi_j (\boldsymbol{x})^{\mu_j} },
\end{align}
where $\phi_i$ is the distance function to $\Gamma_i$.
We set $\mu_i = 1$ if $i \in I_D$ and $\mu_i = 2$ if $i \in I_R$, where $I_D \cup I_R = \{1,\dots,M\}$ are index sets indicating on which segments Dirichlet or Robin boundary conditions are prescribed.

\cite{Sukumar21} require an approximate distance function $\phi_i$ in~\eqref{eq:generic_solution_structure_scalar},~\eqref{generic_solution_structure_scalar_weight_functions} that is both a distance function to $\Gamma_i$ in the sense of Definition~\ref{def_distance_function} and normalized with respect to $\Gamma_i$ in the sense of Definition~\ref{def_normalized_function}.
However, it is not always possible to find such a function $\phi_i$ that is $C^1$ everywhere.
Therefore, we propose to use two different functions instead.
The function in~\eqref{eq:generic_solution_structure_scalar} and~\eqref{generic_solution_structure_scalar_weight_functions}, which we still denote as $\phi_i$, is only required to be a distance function to $\Gamma_i$.
The function in the local solution structures $u_i$, which we now denote as $\bar{\phi}_i$, only needs to be a normalized function with respect to $\Gamma_i$.
Below, we show two ways to choose the local solution structures $u_i$.
For comparison,~\eqref{local_solution_structure_Sukumar_ansatz} shows \cite{Sukumar21}'s choice.
\subsubsection{Generalized local solution structure (GLSS) for piecewise $C^1$ boundary}
For pairwise disjoint boundary segments $\Gamma_1,\dots,\Gamma_M$ we use the local solution structure
\begin{align}
  \label{local_solution_structure_Dirichlet_scalar}
  \forall i & \in I_D : & u_i (\boldsymbol{x}) & = \begin{cases}
    g_i (\boldsymbol{x}) + \bar{\phi}_i (\boldsymbol{x}) \tilde{\Psi}_i (\boldsymbol{x}), & \phi_i \text{ has a vanishing gradient}, \\
    g_i (\boldsymbol{x}), & \text{else},
  \end{cases} \\
  \label{local_solution_structure_Robin_scalar}
  \forall i & \in I_R : &
  u_i (\boldsymbol{x}) & = \Psi_{i} (\boldsymbol{x}) - \bar{\phi}_i(\boldsymbol{x}) \nabla \bar{\phi}_i(\boldsymbol{x}) \cdot \nabla \Psi_{i} (\boldsymbol{x}) + \bar{\phi}_i (\boldsymbol{x}) \left( c_i(\boldsymbol{x}) \Psi_i (\boldsymbol{x}) - h_i (\boldsymbol{x}) \right),
\end{align}
where the $\bar{\phi}_i$ are normalized with respect to $\Gamma_i$ and the $\tilde{\Psi}_i$ and $\Psi_i$ are unknown functions to be learned.
The difference between~\eqref{local_solution_structure_Dirichlet_scalar} and \eqref{local_solution_structure_Sukumar_ansatz} is the term $\bar{\phi}_i (\boldsymbol{x}) \tilde{\Psi}_i (\boldsymbol{x})$.
Without it, if $\phi_i$ has a vanishing gradient, we would prescribe both $u_i = g_i$ on $\Gamma_i$ and $\frac{\partial u}{\partial \boldsymbol{n}} = \frac{\partial g_i}{\partial \boldsymbol{n}}$ on $\Gamma_i$, which would overdetermine the problem.
The additional term $\bar{\phi}_i (\boldsymbol{x}) \tilde{\Psi}_i (\boldsymbol{x})$ avoids that by introducing another unknown function to be trained.
In~\cite{Sukumar21}, the $\phi_i$ were required to be normalized with respect to $\Gamma_i$ and thus could not have a vanishing gradient, therefore this problem did not arise.

Intersecting boundary segments as sketched in Figure~\ref{fig:intersecting_boundary} require further modifications to~\eqref{local_solution_structure_Robin_scalar}.
Consider the example of two boundary segments $\Gamma_1$ and $\Gamma_2$ that share one common point $P$, i.e. $\Gamma_1 \cap \Gamma_2 = \{P\}$. 
In that case, the solution structure~\eqref{eq:generic_solution_structure_scalar} would read
\begin{equation}
	u (\boldsymbol{x}) = \frac{\phi_2^{\mu_2} (\boldsymbol{x})}{\phi_1^{\mu_1} (\boldsymbol{x}) + \phi_2^{\mu_2} (\boldsymbol{x})} u_1 (\boldsymbol{x}) + \frac{\phi_1^{\mu_1} (\boldsymbol{x})}{\phi_1^{\mu_1} (\boldsymbol{x}) + \phi_2^{\mu_2} (\boldsymbol{x})} u_2 (\boldsymbol{x}) + \Psi(\boldsymbol{x}) \phi_1 (\boldsymbol{x})^{\mu_1} \phi_2 (\boldsymbol{x})^{\mu_2}.
\end{equation}
Because $\phi_1(P) = \phi_2(P) = 0$, the first two terms divide by zero in $P$.
As pointed out by~\cite{Rvachev01}, we have
\begin{align}
	\lim_{\boldsymbol{x} \in \Gamma_1, \boldsymbol{x} \rightarrow P} u (\boldsymbol{x}) & = \lim_{\boldsymbol{x} \in \Gamma_1, \boldsymbol{x} \rightarrow P} \frac{\phi_2 (\boldsymbol{x})^{\mu_2} }{\phi_2 (\boldsymbol{x})^{\mu_2} } u_1(\boldsymbol{x}) = u_1(P), \\
	\lim_{\boldsymbol{x} \in \Gamma_2, \boldsymbol{x} \rightarrow P} u (\boldsymbol{x}) & = \lim_{\boldsymbol{x} \in \Gamma_2, \boldsymbol{x} \rightarrow P} \frac{\phi_1 (\boldsymbol{x})^{\mu_1} }{\phi_1 (\boldsymbol{x})^{\mu_1} } u_2(\boldsymbol{x}) = u_2(P),
\end{align}
so that the solution structure is only continuous if $u_1(P) = u_2(P)$.
This holds if $u_1$ and $u_2$ both correspond to Dirichlet boundary conditions and the boundary conditions are well-posed, that is, they match continuously and do not prescribe a discontinuous solution.
However, if, say, $u_1$ corresponds to a Robin condition, then $u_1(P) = \Psi_1(P)$ would hold.
Because $\Psi_1$ is learned, $\Psi_1(P)$ will not be equal to $u_2(P)$ and the solution structure will be discontinuous.
To solve this problem, we do not consider $\Psi_1$  as a learned functions directly but instead express it as a function depending on two other learned functions in a way that $\Psi_1(P) = u_2(P)$ is guaranteed to hold.
If $\Gamma_2$ is a Dirichlet segment, then $\Psi_1 (P) = g_2 (P)$ has to be ensured.
However, if $\Gamma_2$ corresponds to a Robin condition, a new unknown function $\Psi_P$ has to be introduced and it must be ensured that $\Psi_1 (P) = \Psi_2 (P) = \Psi_P (P)$ holds.

To make this precise,
let $\Gamma_i$ be a boundary segment with neighbors $\Gamma_a$ and $\Gamma_b$, where $A$ and $B$ are the intersection points.
\begin{figure}[t!]
	\begin{center}
		\begin{tikzpicture}
		\draw[ultra thick, red] (0,0)-- node[above] {$\Gamma_{i}$} (4,0);
		\draw[ultra thick] (0,0)-- node[above left] {$\Gamma_{a}$} (-2,-1);
		\draw[ultra thick] (4,0)-- node[above ] {$\Gamma_{b}$} (6,-1);
		
		\filldraw [red] (0,0) circle (2pt) node[above] {$A$};
		\filldraw [red] (4,0) circle (2pt) node[above] {$B$};
		\end{tikzpicture}
	\end{center}
	\caption{Sketch of intersecting boundary segments.}
	\label{fig:intersecting_boundary}
\end{figure}
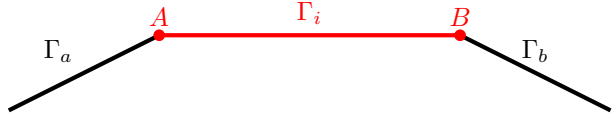
If $\Gamma_i$ is a Robin segment, we define the function $\Psi_i$ as
\begin{equation} \label{eq:Psi_i_GLSS_A_B}
\Psi_i (\boldsymbol{x}) = \frac{\phi_{B} (\boldsymbol{x})}{\phi_{A} (\boldsymbol{x}) + \phi_{B} (\boldsymbol{x})} u_A (\boldsymbol{x}) + \frac{\phi_{A} (\boldsymbol{x})}{\phi_{A} (\boldsymbol{x}) + \phi_{B} (\boldsymbol{x})} u_B (\boldsymbol{x}) + \phi_A (\boldsymbol{x}) \phi_B (\boldsymbol{x}) \bar{\Psi}_i (\boldsymbol{x}),
\end{equation}
where $\phi_A$ and $\phi_B$ are distance functions to $A$ and $B$.
The functions $u_A$ and $u_B$ have to be choosen according to type of boundary conditions prescribed on $\Gamma_a$ and $\Gamma_b$.
If $\Gamma_a$ or $\Gamma_b$ is a Dirichlet segment, we set $u_A = g_a$ or $u_B = g_b$.
In case of a Robin segment, we introduce a new unknown function $\Psi_A$ or $\Psi_B$ and define $u_A=\Psi_A$ or $u_B=\Psi_B$.
The term $\phi_A (\boldsymbol{x}) \phi_B (\boldsymbol{x}) \bar{\Psi}_i (\boldsymbol{x})$ only needs to be included if both $\Gamma_a$ and $\Gamma_b$ are Dirichlet segments.
Function $\bar{\Psi}_i$ is another unknown to be approximated.
We show the complete algorithm for determining local solution structures in section~\ref{sec:algorithms}.

\subsubsection{Orthogonal projections (OP)} \label{sec:orthogonal_projections_scalar}
If all boundary sections with Robin conditions lie in hyperplanes, i.e., for every $\boldsymbol{x} \in \Gamma_i$ the normal vectors $\boldsymbol{n} (\boldsymbol{x})$ are identical, we can use a simpler approach. 
We choose the normalized functions $\bar{\phi}_i$ to be the exact signed distance function to the hyperplane containing $\Gamma_i$.
The local solution structures for Robin conditions are set to
\begin{align}
  \label{local_solution_structure_robin_condition_op}
  \forall i \in I_{R} : \quad &
  u_i (\boldsymbol{x}) = \underbrace{\Psi_i (\mathcal{N} (\boldsymbol{x}; \bar{\phi}_i) )}_{\text{boundary value}} 
  + \underbrace{\bar{\phi}_i (\boldsymbol{x}) f_i(\mathcal{N} (\boldsymbol{x}; \bar{\phi}_i))}_{\text{boundary derivative}}, \\
  \forall i \in I_{R} : \quad &
  f_i (\boldsymbol{x}) = c_i (\boldsymbol{x}) \Psi_i (\boldsymbol{x}) - h_i (\boldsymbol{x}) \quad
  \text{with} \quad \mathcal{N} (\boldsymbol{x}; \bar{\phi}) := \boldsymbol{x} - \bar{\phi} (\boldsymbol{x}) \nabla \bar{\phi} (\boldsymbol{x})
\end{align}
The ansatz uses as a generalized Taylor polynomial expansion by \cite{Rvachev95}.
Here, $\Psi_i$ represents the value and $f_i$ the normal derivative of $u_i$ on $\Gamma_i$.
The concatenations $\Psi_i \circ \mathcal{N} (\,\cdot\, ; \bar{\phi}_i)$ and $f_i \circ \mathcal{N} (\,\cdot\, ; \bar{\phi}_i)$ are the so called normalizer functions of  $\Psi_i$ and $f_i$.

Consider a function $f : \bar{\Omega} \rightarrow \mathbb{R}$ and a boundary segment $\Gamma_i$ with function $\bar{\phi}_i$ that is normalized with respect to $\Gamma_i$.
We call the concatenation $f \circ \mathcal{N} (\,\cdot\, ; \bar{\phi}_i)$ the \emph{normalizer} of $f$, where $\mathcal{N}$ is defined by $\mathcal{N} (\boldsymbol{x}; \bar{\phi}) := \boldsymbol{x} - \bar{\phi} (\boldsymbol{x}) \nabla \bar{\phi} (\boldsymbol{x})$.
Near the boundary, the normalizer behaves like a constant in the normal direction of $\Gamma_i$ so that
\begin{equation}
	\label{eq:property_normalizer_constant_in_normal_direction}
	\forall \boldsymbol{x} \in \Gamma_i : \quad f \left(\mathcal{N} \left(\boldsymbol{x}; \bar{\phi}_i\right)\right) = f(\boldsymbol{x}), \quad \frac{\partial f \left(\mathcal{N} \left(\boldsymbol{x}; \bar{\phi}_i\right)\right)}{\partial \boldsymbol{\nu}} = 0
\end{equation}
holds if $\bar{\phi}_i$ is normalized with respect to $\Gamma_i$.
This property assures that the generalized Taylor polynomial in~\eqref{local_solution_structure_robin_condition_op} satisfies the prescribed boundary conditions.

The first term in~\eqref{local_solution_structure_robin_condition_op} determines the value on $\Gamma_i$ but has zero derivative whilst the second term is zero on $\Gamma_i$ but has non-zero derivative equal to the Robin condition.
Since the exact signed distance function to $\Gamma_i$ is normalized with respect to $\Gamma_i$, property~\eqref{eq:property_normalizer_constant_in_normal_direction} holds.
Thus, $\mathcal{N} (\,\cdot\, ; \bar{\phi}_i)$ is simply the orthogonal projection, mapping its argument onto the corresponding hyperplane, as visualized in Figure~\ref{fig:n}.

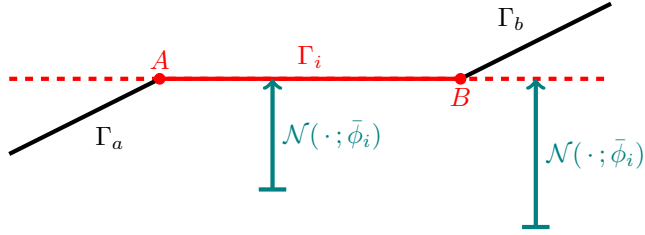
\begin{figure}[th]
	\begin{center}
		\begin{tikzpicture}
			\draw[ultra thick, dashed, red] (-2,0) -- (6,0);
			\draw[ultra thick, red] (0,0)-- node[above] {$\Gamma_{i}$} (4,0);
			\draw[ultra thick] (0,0)-- node[below right] {$\Gamma_{a}$} (-2,-1);
			\draw[ultra thick] (4,0)-- node[above left] {$\Gamma_{b}$} (6,1);
			
			\filldraw [red] (0,0) circle (2pt) node[above] {$A$};
			\filldraw [red] (4,0) circle (2pt) node[below] {$B$};
			
			\draw[ultra thick, |->, teal] (5,-2)-- node[right] {$\mathcal{N} (\, \cdot \, ; \bar{\phi}_i)$}  (5,0);

			\draw[ultra thick, |->, teal] (1.5,-1.5)-- node[right] {$\mathcal{N} (\, \cdot \, ; \bar{\phi}_i)$}  (1.5,0);
			
		\end{tikzpicture}
	\end{center}
	\caption{If $\bar{\phi}_i$ is the exact signed distance function to the hyperplane that $\Gamma_i$ lies on (indicated by the dotted line), $\mathcal{N} (\, \cdot \, ; \bar{\phi}_i)$ maps its argument onto the hyperplane, which is illustrated by the arrows.}
	\label{fig:n}
\end{figure}

Therefore, each $\Psi_i$ is only evaluated on its corresponding hyperplane and we can set
\begin{equation} \label{eq:scalar_op_psi_i_dirichlet_fitting}
  \forall i \in I_{R} : \quad \Psi_i (\boldsymbol{x}) = g(\boldsymbol{x}) + \tilde{\Psi} (\boldsymbol{x}) \prod_{k \in I_D} \phi_k (\boldsymbol{x}),
\end{equation}
where $g$ is a function satisfying all Dirichlet conditions.
This avoids discontinuities of the transfinite interpolant at intersection points, and reduced the computational effort, since only the single function $\tilde{\Psi}$ needs to be trained.

\subsection{Exact boundary conditions for systems of partial differential equations}\label{subsec:pde_systems}
Consider a system of differential equations with solution $\boldsymbol{u} : \mathbb{R}^2 \supset \Omega \rightarrow \mathbb{R}^n$ and boundary conditions prescribed on segments $\Gamma_1, \dots, \Gamma_M \subset \partial \Omega$ with each $\Gamma_i$ being $C^1$.
For $i \in \{1,\dots,M\}$ and $\boldsymbol{x} \in \Gamma_i$ let $\boldsymbol{b}_i^{(1)} (\boldsymbol{x}), \dots, \boldsymbol{b}_i^{(n)} (\boldsymbol{x})$ be a basis of $\mathbb{R}^n$.
Let $\mathit{IJ}_D, \mathit{IJ}_R \subset I \times J := \{1,\dots,M\} \times \{1,\dots,n\}$ be index sets such that
\begin{align}
  \label{eq:Dirichlet_boundary_condition_system_case}
  \forall (i,j) & \in \mathit{IJ}_D : \forall \boldsymbol{x} \in \Gamma_i: &
  \boldsymbol{b}_i^{(j)} (\boldsymbol{x}) \cdot \boldsymbol{u} (\boldsymbol{x}) & = g_i^{(j)} (\boldsymbol{x}), \\
  \label{eq:Robin_boundary_condition_system_case}
  \forall (i,j) & \in \mathit{IJ}_R : \forall \boldsymbol{x} \in \Gamma_i: &
  \frac{\partial \left(\boldsymbol{b}_{i}^{(j)} (\boldsymbol{x}) \cdot \boldsymbol{u} (\boldsymbol{x}) \right)}{\partial \boldsymbol{n}} + \boldsymbol{c}_i^{(j)} (\boldsymbol{x}) \cdot \boldsymbol{u} (\boldsymbol{x}) & = h_i^{(j)} (\boldsymbol{x}),
\end{align}
Without loss of generality, we assume that for every $(i,j) \in \mathit{IJ}_R$ and $\boldsymbol{x} \in \Gamma_i$ the $c_i^{(j)} (\boldsymbol{x})$ lies in $\Span(\{ \boldsymbol{b}_i^{(k)} (\boldsymbol{x}) \, | \, (i,k) \not \in \mathit{IJ}_D \})$.
The generic solution structure now becomes
\begin{align}
  \label{generic_solution_structure_system}
  & \boldsymbol{u} (\boldsymbol{x}) = \sum_{i=1}^{M} w_i (\boldsymbol{x}) \boldsymbol{u}_i (\boldsymbol{x})
  \enspace + \enspace [\Psi^{(1)}(\boldsymbol{x}), \dots, \Psi^{(n)}(\boldsymbol{x})]^T \prod_{i=1}^{M} \phi_i (\boldsymbol{x})^{\mu_i}, \\
  & \mu_i = \begin{cases}
    2, & \exists j : (i,j) \in \mathit{IJ}_R, \\
    1, & \text{else,}
  \end{cases}
\end{align}
with weights given in \eqref{generic_solution_structure_scalar_weight_functions}. 
Functions $\boldsymbol{u}_i$ are expressed as a linear combination of the basis functions
\begin{equation} \label{local_solution_structure_system_ansatz_combination_of_basis_functions}
  \forall i \in I : \quad \boldsymbol{u}_i (\boldsymbol{x}) = \sum_{j=1}^{n} \boldsymbol{b}_i^{(j)} (\boldsymbol{x}) u_i^{(j)} (\boldsymbol{x}).
\end{equation}
\begin{proposition}\label{prop:bc-system}
If the vectors $\boldsymbol{b}_i^{(1)} (\boldsymbol{x}), \dots, \boldsymbol{b}_i^{(n)} (\boldsymbol{x})$ form a basis of $\mathbb{R}^n$ for every $\boldsymbol{x} \in N(\Gamma_i)$ where $N(\Gamma_i)$ is an open neighborhood of $\Gamma_i$, we have
  \begin{equation}
    \label{eq:prop_a}
    \forall (i,j)  \in I \times J : \  \forall \boldsymbol{x} \in \Gamma_i :  \
    \boldsymbol{b}_i^{(j)} (\boldsymbol{x}) \cdot \boldsymbol{u} (\boldsymbol{x})  = u_i^{(j)} (\boldsymbol{x}).
    \end{equation}
    Further,
    \begin{equation}
    \label{eq:prop_b}
    \forall (i,j)  \in I \times J  : \  \forall \boldsymbol{x}  \in \Gamma_i : \
    \frac{\partial \left( \boldsymbol{b}_i^{(j)} (\boldsymbol{x}) \cdot \boldsymbol{u} (\boldsymbol{x}) \right)}{\partial \boldsymbol{n}}  = \frac{\partial u_i^{(j)} (\boldsymbol{x})}{\partial \boldsymbol{n}},
  \end{equation} 
  if $\phi_i^{\mu_i}$ has  a vanishing gradient.
\end{proposition}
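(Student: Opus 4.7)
The plan is to reduce both statements to two base facts about the generic solution structure \eqref{generic_solution_structure_system}: (i) $\boldsymbol{u}(\boldsymbol{x}) = \boldsymbol{u}_i(\boldsymbol{x})$ for $\boldsymbol{x} \in \Gamma_i$, and (ii) under the vanishing gradient hypothesis on $\phi_i^{\mu_i}$, also $\partial_{\boldsymbol{n}} \boldsymbol{u}(\boldsymbol{x}) = \partial_{\boldsymbol{n}} \boldsymbol{u}_i(\boldsymbol{x})$ for $\boldsymbol{x} \in \Gamma_i$. Once these vector-valued identities are in hand, taking the inner product with $\boldsymbol{b}_i^{(j)}$ and using the expansion \eqref{local_solution_structure_system_ansatz_combination_of_basis_functions} extracts the scalar component $u_i^{(j)}$ claimed in \eqref{eq:prop_a} and \eqref{eq:prop_b}. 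The heart of the argument is therefore a careful analysis of the weight functions $w_k$ from \eqref{generic_solution_structure_scalar_weight_functions} and of the product $\prod_l \phi_l^{\mu_l}$ at $\Gamma_i$.

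First I would compute values on $\Gamma_i$, where $\phi_i = 0$ and $\phi_j > 0$ for $j \neq i$. Writing $S := \sum_k \prod_{j \neq k} \phi_j^{\mu_j}$, every summand with $k \neq i$ contains $\phi_i^{\mu_i} = 0$, so $S$ reduces to $\prod_{j \neq i} \phi_j^{\mu_j}$, which is the numerator of $w_i$; hence $w_i \equiv 1$ on $\Gamma_i$. For $k \neq i$, the numerator of $w_k$ carries the factor $\phi_i^{\mu_i}$ and vanishes, giving $w_k \equiv 0$ on $\Gamma_i$. Likewise $\prod_l \phi_l^{\mu_l} \equiv 0$ on $\Gamma_i$. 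Substituting into \eqref{generic_solution_structure_system} yields $\boldsymbol{u} = \boldsymbol{u}_i$ on $\Gamma_i$, and taking the inner product with $\boldsymbol{b}_i^{(j)}$ together with \eqref{local_solution_structure_system_ansatz_combination_of_basis_functions} yields $\boldsymbol{b}_i^{(j)} \cdot \boldsymbol{u} = \sum_k (\boldsymbol{b}_i^{(j)} \cdot \boldsymbol{b}_i^{(k)}) u_i^{(k)}$, which collapses to $u_i^{(j)}$ under orthonormality of the basis in $N(\Gamma_i)$ (the natural assumption implicit in the statement, fitting the normal/tangential decomposition used in \S~\ref{subsec:nse}). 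This proves \eqref{eq:prop_a}.

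For \eqref{eq:prop_b} I would rewrite $\boldsymbol{u} - \boldsymbol{u}_i = (w_i - 1)\boldsymbol{u}_i + \sum_{k \neq i} w_k \boldsymbol{u}_k + [\Psi^{(1)},\dots,\Psi^{(n)}]^T \prod_l \phi_l^{\mu_l}$ and differentiate by $\boldsymbol{n}$. The product $\prod_l \phi_l^{\mu_l}$ has $\phi_i^{\mu_i}$ as a factor; by the Leibniz rule the remaining sum again produces terms each of which carries either $\phi_i^{\mu_i}$ or $\nabla \phi_i^{\mu_i}$, both of which vanish on $\Gamma_i$ by hypothesis, so this term contributes nothing to value or normal derivative. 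For the $w_k$ with $k \neq i$, the same observation applies to their numerators, while the denominator $S$ is bounded away from zero on $\Gamma_i$ (the $k=i$ summand does not carry $\phi_i^{\mu_i}$); the quotient rule then shows both $w_k$ and $\partial_{\boldsymbol{n}} w_k$ vanish on $\Gamma_i$. An analogous quotient-rule computation for $w_i - 1 = (N_i - S)/S$ shows its value and normal derivative also vanish on $\Gamma_i$. Combining gives $\partial_{\boldsymbol{n}}(\boldsymbol{u} - \boldsymbol{u}_i) = 0$ on $\Gamma_i$. Taking $\boldsymbol{b}_i^{(j)} \cdot$ and using the Leibniz rule (the $\partial_{\boldsymbol{n}} \boldsymbol{b}_i^{(j)}$ contribution cancels between the two sides because $\boldsymbol{u} = \boldsymbol{u}_i$ on $\Gamma_i$ by \eqref{eq:prop_a}) plus orthonormality of $\{\boldsymbol{b}_i^{(k)}\}$ throughout $N(\Gamma_i)$ so that $\boldsymbol{b}_i^{(j)} \cdot \boldsymbol{u}_i \equiv u_i^{(j)}$ on $N(\Gamma_i)$, finally yields $\partial_{\boldsymbol{n}}(\boldsymbol{b}_i^{(j)} \cdot \boldsymbol{u}) = \partial_{\boldsymbol{n}} u_i^{(j)}$ on $\Gamma_i$. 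The main obstacle is the bookkeeping in the quotient-rule step for the $w_k$; the vanishing-gradient hypothesis on $\phi_i^{\mu_i}$ is used essentially here (without it the normal derivatives of the weights do not cancel), whereas elsewhere only the distance-function property of $\phi_i$ enters.
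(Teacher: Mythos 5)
Your proposal is correct and follows essentially the same route as the paper's proof: both reduce the claim to showing that on $\Gamma_i$ the weights satisfy $w_k=\delta_{ik}$ with vanishing normal derivatives (via the quotient rule and factoring $\phi_i^{\mu_i}$ out of the off-diagonal numerators and of $S-N_i$), that the product term $\prod_l\phi_l^{\mu_l}$ vanishes together with its normal derivative, and that orthonormality of $\{\boldsymbol{b}_i^{(k)}\}$ throughout the neighborhood $N(\Gamma_i)$ collapses $\boldsymbol{b}_i^{(j)}\cdot\boldsymbol{u}_i$ to $u_i^{(j)}$. Your grouping of the terms as $\boldsymbol{u}-\boldsymbol{u}_i$ and your explicit remark that the $\partial_{\boldsymbol{n}}\boldsymbol{b}_i^{(j)}$ contributions cancel are only presentational refinements of the same argument.
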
 

\begin{proof}
The proof is shown in section~\ref{sec:prop-proof}.
\end{proof}

Therefore, every  $u_i^{(j)}$ has to satisfy the corresponding boundary condition.
For Dirichlet conditions this is achieved by setting
\begin{equation} \label{eq:ansatz_local_solution_structure_dirichlet_system_glss_op}
  \forall (i,j) \in \mathit{IJ}_D : \quad u_i^{(j)} (\boldsymbol{x}) = \begin{cases}
    g_i^{(j)} (\boldsymbol{x}) + \bar{\phi}_i (\boldsymbol{x}) \tilde{\Psi}_i^{(j)} (\boldsymbol{x}), & \phi_{i}^{\mu_i} \text{ has a vanishing gradient}, \\
    g_i^{(j)} (\boldsymbol{x}), & \text{else,}
  \end{cases}
\end{equation}
for both GLSS and OP.
However, the two approaches differ in their treatment of Robin conditions.

\subsubsection{GLSS}
If all boundary segments $\Gamma_1, \dots, \Gamma_M$ are pairwise disjoint, we set
\begin{align}
  \label{eq:ansatz_local_solution_structure_system_glss}
  u_i^{(j)} (\boldsymbol{x}) & = \Psi_i^{(j)} (\boldsymbol{x}) - \bar{\phi}_i (\boldsymbol{x}) \nabla \bar{\phi}_i (\boldsymbol{x}) \cdot \nabla \Psi_i^{(j)} (\boldsymbol{x}) + \bar{\phi}_i (\boldsymbol{x}) f_i^{(j)}(\boldsymbol{x}), \\
  \label{eq:ansatz_f_i_j}
  f_i^{(j)} (\boldsymbol{x}) & = \boldsymbol{c}_i^{(j)} (\boldsymbol{x}) \cdot \sum_{k=1, (i,k) \not\in \mathit{IJ}_D}^{n} \boldsymbol{b}_i^{(k)} (\boldsymbol{x}) \Psi_i^{(k)} (\boldsymbol{x}) \enspace - \enspace h_i^{(j)} (\boldsymbol{x}).
\end{align}
for $(i,j) \in \mathit{IJ}_R$.
For $(i,j) \in I \times J \setminus (\mathit{IJ}_D \cup \mathit{IJ}_R)$, we define
$u_i^{(j)} (\boldsymbol{x}) = \Psi_i^{(j)} (\boldsymbol{x})$.
As above, the functions $\Psi_i^{(j)}$ have to be modified, if $\Gamma_i$ has intersection points with other segments.

\paragraph{Intersecting boundary segments.}
We generalize our approach to the system case and let
\begin{equation}
	\begin{split}
  \Psi_i^{(j)} (\boldsymbol{x}) = \boldsymbol{b}_i^{(j)} (\boldsymbol{x}) \cdot \left( \frac{\phi_{B} (\boldsymbol{x})}{\phi_{A} (\boldsymbol{x}) + \phi_{B} (\boldsymbol{x})} \boldsymbol{u}_A (\boldsymbol{x}) + \frac{\phi_{A} (\boldsymbol{x})}{\phi_{A} (\boldsymbol{x}) + \phi_{B} (\boldsymbol{x})} \boldsymbol{u}_B (\boldsymbol{x}) \right)\\ + \phi_A (\boldsymbol{x}) \phi_B (\boldsymbol{x}) \bar{\Psi}_i^{(j)} (\boldsymbol{x}).
\end{split}
\end{equation}
The construction of the functions $\boldsymbol{u}_A$ and $\boldsymbol{u}_B$ is more difficult than in the scalar case.
We demonstrate how to do this with an example.
Consider two segments $\Gamma_1$ and $\Gamma_2$ with intersection point $P$ and assume for simplicity that $u \in \mathbb{R}^3$.
Let Dirichlet conditions be prescribed on $\Gamma_1$ with respect to the basis vectors $\boldsymbol{b}_1^{(1)}(P)=(1,0,0)^T$ and $\boldsymbol{b}_1^{(2)}(P)=(0,1,0)^T$ and on $\Gamma_2$ with respect to the vector $\boldsymbol{b}_2^{(1)}(P)=(1,1,0)^T$.
These three basis vectors span a two-dimensional subspace with basis $(1,0,0)^T, (0,1,0)^T$.
We define $\boldsymbol{u}_P$ as a linear combination of this basis and an unknown component acting on the orthogonal complement, i.e.
\begin{equation}
  \boldsymbol{u}_P (\boldsymbol{x}) = g_P^{(1)} \begin{pmatrix}
    1\\0\\0
  \end{pmatrix}
  + g_P^{(2)} \begin{pmatrix}
    0\\1\\0
  \end{pmatrix}
  + \Psi_P^{(3)} (\boldsymbol{x}) \begin{pmatrix}
    0 \\ 0 \\ 1
  \end{pmatrix}.
\end{equation}
Note that the constants $g_P^{(1)}$ and $g_P^{(2)}$ have to be chosen such that $\boldsymbol{u}_P$ satisfies all Dirichlet conditions prescribed in $P$.
A complete algorithm can be found in section~\ref{sec:algorithms}.

\subsubsection{OP}
If all boundary segments $\Gamma_i$ lie in hyperplanes and $\boldsymbol{b}^{(j)} := \boldsymbol{b}_1^{(j)} = \dots = \boldsymbol{b}_M^{(j)}$ holds for every $j=1,\dots,n$, the global solution structure, given by \eqref{generic_solution_structure_system} and \eqref{local_solution_structure_system_ansatz_combination_of_basis_functions}, simplifies to
\begin{equation} \label{eq:global_solution_structure_op_system_simplified}
  \boldsymbol{u} (\boldsymbol{x}) = \sum_{j=1}^{n} \boldsymbol{b}^{(j)} (\boldsymbol{x}) \sum_{i=1}^{M} w_i (\boldsymbol{x}) u_i^{(j)} (\boldsymbol{x}) \enspace + \enspace [\Psi^{(1)} (\boldsymbol{x}), \dots, \Psi^{(n)} (\boldsymbol{x})]^T \prod_{i=1}^{M} \phi_i (\boldsymbol{x})^{\mu_i}.
\end{equation}
We choose the local solution structures as
\begin{equation} \label{eq:ansatz_local_solution_structure_op_system}
  u_i^{(j)} (\boldsymbol{x}) = \begin{cases}
    \begin{cases}
      g_i^{(j)} (\boldsymbol{x}) + \bar{\phi}_i (\boldsymbol{x}) \tilde{\Psi}_i (\boldsymbol{x}), & \phi_i \text{ has a vanishing gradient} \\
      g_i^{(j)} (\boldsymbol{x}), & \text{else}
    \end{cases}, & (i,j) \in \mathit{IJ}_D, \\
    \bar{\Psi}^{(j)} (\mathcal{N}(\boldsymbol{x};\bar{\phi}_i)) + \bar{\phi}_i (\boldsymbol{x}) f_i^{(j)} (\mathcal{N} (\boldsymbol{x}; \bar{\phi}_i)), & (i,j) \in \mathit{IJ}_R, \\
    \bar{\Psi}^{(j)} (\boldsymbol{x}), & \text{else}
  \end{cases} 
\end{equation}
The functions $f_i^{(j)}$ are defined as
\begin{equation}
  f_i^{(j)} (\boldsymbol{x}) = \boldsymbol{c}_i^{(j)} (\boldsymbol{x}) \cdot \sum_{k=1, (i,k) \not \in \mathit{IJ}_D}^{n} \bar{\Psi}^{(k)} (\boldsymbol{x}) \boldsymbol{b}^{(k)} (\boldsymbol{x}) \enspace - \enspace h_i^{(j)} (\boldsymbol{x}),
\end{equation}
and the $\bar{\Psi}^{(j)}$ are defined as
\begin{equation} \label{eq:system_op_bar_psi_j_tilde_psi_j}
  \bar{\Psi}^{(j)} (\boldsymbol{x}) = g^{(j)} (\boldsymbol{x}) + \tilde{\Psi}^{(j)} (\boldsymbol{x}) \prod_{i=1, (i,j) \in \mathit{IJ}_D}^{M} \phi_i (\boldsymbol{x}).
\end{equation}
Each function $g^{(j)}$ is chosen in a way that it satisfies all Dirichlet conditions prescribed with respect to the basis vector $\boldsymbol{b}^{(j)}$ and $\tilde{\Psi}^{(j)}$ is an unknown function to be approximated.

\begin{theorem}\label{thm:bcsatisfied}
  The derived solution structure satisfies the boundary conditions \eqref{eq:Dirichlet_boundary_condition_system_case} and \eqref{eq:Robin_boundary_condition_system_case} for both the GLSS and OP approach.
\end{theorem}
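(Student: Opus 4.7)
The plan is to invoke Proposition~\ref{prop:bc-system} to reduce the global boundary conditions \eqref{eq:Dirichlet_boundary_condition_system_case}--\eqref{eq:Robin_boundary_condition_system_case} on $\boldsymbol{u}$ to scalar conditions on each $u_i^{(j)}$ restricted to $\Gamma_i$. Equation~\eqref{eq:prop_a} turns the Dirichlet requirement into $u_i^{(j)}|_{\Gamma_i} = g_i^{(j)}$. Whenever some $(i,k) \in \mathit{IJ}_R$ we have $\mu_i = 2$, so $\phi_i^{\mu_i}$ has vanishing gradient on $\Gamma_i$ and equation~\eqref{eq:prop_b} applies, turning the Robin requirement into $\partial u_i^{(j)}/\partial \boldsymbol{n} + \boldsymbol{c}_i^{(j)} \cdot \boldsymbol{u} = h_i^{(j)}$ on $\Gamma_i$. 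The remainder of the proof verifies these two scalar identities for both ansätze. The Dirichlet case is common to GLSS and OP and follows immediately from ansatz~\eqref{eq:ansatz_local_solution_structure_dirichlet_system_glss_op} since $\bar{\phi}_i \equiv 0$ on $\Gamma_i$.

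For the Robin case under GLSS, I evaluate~\eqref{eq:ansatz_local_solution_structure_system_glss} on $\Gamma_i$: only the leading term survives and $u_i^{(j)}|_{\Gamma_i} = \Psi_i^{(j)}$. Computing the outward normal derivative, the combination $\Psi_i^{(j)} - \bar{\phi}_i \nabla\bar{\phi}_i \cdot \nabla \Psi_i^{(j)}$ has zero normal derivative on $\Gamma_i$ because $\nabla\bar{\phi}_i|_{\Gamma_i} = \boldsymbol{\nu}$ with $|\boldsymbol{\nu}| = 1$ (the content of normalization), whereas $\partial(\bar{\phi}_i f_i^{(j)})/\partial \boldsymbol{n}|_{\Gamma_i} = -f_i^{(j)}|_{\Gamma_i}$ because the outward normal is anti-parallel to $\nabla\bar{\phi}_i$. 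Substituting the definition~\eqref{eq:ansatz_f_i_j} and using the span hypothesis $\boldsymbol{c}_i^{(j)} \in \Span(\{\boldsymbol{b}_i^{(k)} : (i,k) \notin \mathit{IJ}_D\})$ together with Proposition~\ref{prop:bc-system} to rewrite $\boldsymbol{c}_i^{(j)} \cdot \boldsymbol{u}|_{\Gamma_i}$ in terms of the non-Dirichlet $\Psi_i^{(k)}$, the required cancellation $-f_i^{(j)} + \boldsymbol{c}_i^{(j)} \cdot \boldsymbol{u} = h_i^{(j)}$ drops out on $\Gamma_i$.

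For the Robin case under OP, I use the normalizer identity~\eqref{eq:property_normalizer_constant_in_normal_direction}, which applies because $\bar{\phi}_i$ is the exact signed distance to the hyperplane containing $\Gamma_i$. Hence $\bar{\Psi}^{(j)} \circ \mathcal{N}(\,\cdot\,; \bar{\phi}_i)$ equals $\bar{\Psi}^{(j)}$ on $\Gamma_i$ and has vanishing normal derivative there, so the normal derivative of $u_i^{(j)}$ reduces to that of $\bar{\phi}_i f_i^{(j)}(\mathcal{N})$, which again yields $-f_i^{(j)}|_{\Gamma_i}$. The remainder mirrors GLSS, with $\Psi_i^{(k)}$ replaced by $\bar{\Psi}^{(k)}$. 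It remains to observe that $\bar{\Psi}^{(j)}$ in~\eqref{eq:system_op_bar_psi_j_tilde_psi_j} preserves every Dirichlet value, since $\phi_k \equiv 0$ on every Dirichlet segment $\Gamma_k$ appearing in the product.

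The main obstacle is bookkeeping: carefully tracking which terms in the local solution structures vanish on $\Gamma_i$ and which contribute to the outward normal derivative, and exploiting the span assumption on $\boldsymbol{c}_i^{(j)}$ so that the Dirichlet components of $\boldsymbol{u}$ drop out of $\boldsymbol{c}_i^{(j)} \cdot \boldsymbol{u}|_{\Gamma_i}$ and close the Robin identity. Intersection-point corrections such as~\eqref{eq:Psi_i_GLSS_A_B} do not disrupt the argument: they only rearrange the values of $\Psi_i^{(j)}$ on the interior of $\Gamma_i$ without altering its trace or normal derivative away from the intersections, and they enforce the matching values $u_A, u_B$ at corner points by construction.
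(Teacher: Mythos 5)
Your plan is correct and follows essentially the same route as the paper's proof: reduce to the local structures $u_i^{(j)}$ via Proposition~\ref{prop:bc-system}, dispatch the Dirichlet case from $\bar{\phi}_i|_{\Gamma_i}=0$, and for the Robin case show the normal derivative of the leading term vanishes while $\partial(\bar{\phi}_i f_i^{(j)})/\partial\boldsymbol{n}=-f_i^{(j)}$, closing with the span assumption on $\boldsymbol{c}_i^{(j)}$; the OP case is handled via the normalizer property exactly as in the paper. Your explicit remarks that $\mu_i=2$ guarantees the vanishing-gradient hypothesis of \eqref{eq:prop_b} and that the intersection-point corrections do not alter traces or normal derivatives are welcome clarifications the paper leaves implicit, but they do not change the argument.
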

\begin{proof}
The proof can be found in section~\ref{sec:appendix_proof}.
\end{proof}

\subsection{Algorithms}\label{sec:algorithms}
For scalar problems, the full algorithm to create the solution structure is shown in Algorithm~\ref{alg:intersecting_boundary_segments_scalar}.
The algorithm to construct the solution structures for the system case is shown in Algorithm~\ref{alg:intersecting_boundary_segments_system_part_1} and Algorithm~\ref{alg:intersecting_boundary_segments_system_part_2}.
\begin{algorithm}[th]
	\caption{Algorithm to determine local solution structure $u_i$ for for GLSS}\label{alg:intersecting_boundary_segments_scalar}
	\begin{algorithmic}
		\For{$i=1,\dots,M$}
		\If{$\Gamma_i$ is a Dirichlet segment}
		\If{$\phi_i$ has a vanishing gradient} 
		\State $\tilde{\Psi}_i$ is considered as an unknown function
		\State $u_i \gets \left( \boldsymbol{x} \mapsto g_i (\boldsymbol{x}) + \bar{\phi}_i (\boldsymbol{x}) \tilde{\Psi}_i (\boldsymbol{x}) \right)$
		\Else     
		\State $u_i \gets \left( \boldsymbol{x} \mapsto g_i (\boldsymbol{x}) \right)$
		\EndIf
		\Else
		\If{$\Gamma_i$ has no intersection with any other segment} 
		\State $\Psi_i$ is considered as an unknown function
		\ElsIf{$\Gamma_i$ has neighboring boundary segments $\Gamma_a$ and $\Gamma_b$ with corresponding intersecting points $A$ and $B$ as in Fig. \ref{fig:intersecting_boundary}} 
		\For{$(p,P) \in \{(a,A),(b,B)\}$}
		\If{$\Gamma_p$ is Dirichlet segment}
		\State $u_P \gets (\boldsymbol{x} \mapsto g_p(\boldsymbol{x}))$ 
		\ElsIf{$\Gamma_p$ is Robin segment}
		\State $\Psi_P$ is considered as an unknown function
		\State $u_P \gets (\boldsymbol{x} \mapsto \Psi_P(\boldsymbol{x}))$ 
		\EndIf
		\EndFor
		\State $\bar{\Psi}_i$ is considered as an unknown function
		\State $\Psi_i \gets \left( \boldsymbol{x} \mapsto \frac{\phi_{B} (\boldsymbol{x})}{\phi_{A} (\boldsymbol{x}) + \phi_{B} (\boldsymbol{x})} u_A (\boldsymbol{x}) + \frac{\phi_{A} (\boldsymbol{x})}{\phi_{A} (\boldsymbol{x}) + \phi_{B} (\boldsymbol{x})} u_B (\boldsymbol{x}) + \phi_A (\boldsymbol{x}) \phi_B (\boldsymbol{x}) \bar{\Psi}_i (\boldsymbol{x}) \right)$
		\EndIf
		\State $u_i \gets \left( \boldsymbol{x} \mapsto \Psi_{i} (\boldsymbol{x}) - \bar{\phi}_i(\boldsymbol{x}) \nabla \bar{\phi}_i(\boldsymbol{x}) \cdot \nabla \Psi_{i} (\boldsymbol{x}) + \bar{\phi}_i (\boldsymbol{x}) \left( c_i(\boldsymbol{x}) \Psi_i (\boldsymbol{x}) - h_i (\boldsymbol{x}) \right) \right)$
		\EndIf
		\EndFor    
	\end{algorithmic}
\end{algorithm}
\begin{algorithm}[th]
	\caption{Algorithm to determine the local solution structures $u_i^{(j)}$ for GLSS (Part 1)}\label{alg:intersecting_boundary_segments_system_part_1}
	\begin{algorithmic}[1]
		\State $\boldsymbol{\mathit{IP}} \gets \{\}$
		\For{$i=1,\dots,M$}
		\If{$\Gamma_i$ has common points $A$ and $B$ with other segments \textbf{and} $\exists j \in \{1,\dots,n\} : \boldsymbol{b}_i^{(j)}$ does not correspond to a Dirichlet condition}
		\State $\boldsymbol{\mathit{IP}} \gets \boldsymbol{\mathit{IP}} \cup \{A,B\}$
		\EndIf
		\EndFor \Comment{Now, $\boldsymbol{\mathit{IP}}$ contains all intersection-points that need to be taken extra care of}
		\For{every $P \in \boldsymbol{\mathit{IP}}$}
		\State $\boldsymbol{B} \times \boldsymbol{G} \gets \{\} \times \{\}$
		\For{every $\Gamma_i$ with $P \in \Gamma_i$}
		\For{every $j \in \{1,\dots,n\}$ with $\boldsymbol{b}_i^{(j)}$ belonging to a Dirichlet condition}
		\If{$\dim(\Span(\boldsymbol{B}\cup\{\boldsymbol{b}_i^{(j)}(P)\})) > \dim(\Span(\boldsymbol{B}))$}
		\State $\boldsymbol{B} \times \boldsymbol{G} \gets \boldsymbol{B} \times \boldsymbol{G} \cup \{ (\boldsymbol{b}_i^{(j)} (P), g_i^{(j)} (P)) \}$
		\EndIf
		\EndFor
		\EndFor
		\State $\boldsymbol{b}_P^{(1)}, \dots, \boldsymbol{b}_P^{(\dim(\Span(\boldsymbol{B})))} \gets \text{basis of } \Span(\boldsymbol{B})$
		\State $A \gets \text{zeros} (\dim(\Span(\boldsymbol{B})),\dim(\Span(\boldsymbol{B})))$
		\State $b \gets \text{zeros} (\dim(\Span(\boldsymbol{B})),1)$
		\For{$k,(\boldsymbol{b},g) \in \text{enumerate}(\boldsymbol{B} \times \boldsymbol{G})$}
		\State $A[k,:] \gets \boldsymbol{b}^T (\boldsymbol{b}_P^{(1)}, \dots, \boldsymbol{b}_P^{(\dim(\Span(\boldsymbol{B})))})$
		\State $b[k] \gets g$
		\EndFor
		\State $(g_P^{(1)}, \dots, g_P^{(\dim(\Span(\boldsymbol{B})))})^T \gets \text{solve}(A,b)$
		\State $\boldsymbol{b}_P^{(\dim(\Span(\boldsymbol{B}))+1)}, \dots, \boldsymbol{b}_P^{(n)} \gets \text{basis of orthogonal complement of } \Span(\boldsymbol{B})$
		\State $\Psi_P^{(\dim(\Span(\boldsymbol{B}))+1)}, \dots, \Psi_P^{(n)}$ are considered as unknown functions
		\State $\boldsymbol{u}_P \gets \left( \boldsymbol{x} \mapsto \sum_{j=1}^{\dim(\Span(\boldsymbol{B}))} \boldsymbol{b}_P^{(j)} g_P^{(j)} + \sum_{j=\dim(\Span(\boldsymbol{B}))+1}^{n} \boldsymbol{b}_P^{(j)} \Psi_P^{(j)} (\boldsymbol{x}) \right)$
		\EndFor
		\algstore{algorithm_system_of_equations}
	\end{algorithmic}
\end{algorithm}

\begin{algorithm}
	\caption{Algorithm to determine the local solution structures $u_i^{(j)}$ for GLSS (part 2)}\label{alg:intersecting_boundary_segments_system_part_2}
	\begin{algorithmic}[1]
		\algrestore{algorithm_system_of_equations}
		\For{$i=1,\dots,M$}
		\For{$j=1,\dots,n$}
		\If{$\boldsymbol{b}_i^{(j)}$ corresponds to a Dirichlet condition}
		\If{$\phi_i^{\mu_i}$ has a non-zero gradient on $\Gamma_i$}
		\State $u_i^{(j)} \gets \left( \boldsymbol{x} \mapsto g_i^{(j)}(\boldsymbol{x}) \right)$
		\Else
		\State $\tilde{\Psi}_i^{(j)}$ is considered as unknown function
		\State $u_i^{(j)} \gets \left( \boldsymbol{x} \mapsto g_i^{(j)}(\boldsymbol{x}) + \bar{\phi}_i (\boldsymbol{x}) \tilde{\Psi}_i^{(j)} (\boldsymbol{x}) \right)$
		\EndIf
		\Else
		\If{$\Gamma_i$ has common points $A$ and $B$ with other segments}
		\State $\Psi_i^{(j)} \gets \quad \boldsymbol{x} \mapsto \boldsymbol{b}_i^{(j)} (\boldsymbol{x}) \cdot \left( \frac{\phi_B(\boldsymbol{x})}{\phi_A(\boldsymbol{x}) + \phi_B(\boldsymbol{x})} \boldsymbol{u}_A (\boldsymbol{x}) +  \frac{\phi_A(\boldsymbol{x})}{\phi_A(\boldsymbol{x}) + \phi_B(\boldsymbol{x})} \boldsymbol{u}_B (\boldsymbol{x}) \right) $
		\State \hspace{1.5cm}$+ \phi_A (\boldsymbol{x}) \phi_B (\boldsymbol{x}) \bar{\Psi}_i^{(j)} (\boldsymbol{x})$
		\Else
		\State $\Psi_i^{(j)}$ is considered as unknown function 
		\EndIf
		\If{$\boldsymbol{b}_i^{(j)}$ belongs to a Robin condition}
		\State $u_i^{(j)} \gets \quad \boldsymbol{x} \mapsto \Psi_i^{(j)} (\boldsymbol{x}) - \bar{\phi}_i (\boldsymbol{x}) \nabla \bar{\phi}_i (\boldsymbol{x}) \cdot \nabla \Psi_i^{(j)} (\boldsymbol{x})$
		\State \hspace{1.5cm}$+ \bar{\phi}_i (\boldsymbol{x}) \left( \boldsymbol{c}_i^{(j)} (\boldsymbol{x}) \cdot \sum\limits_{k=1, (i,k) \not \in \mathit{IJ}_D}^{n} \boldsymbol{b}_i^{(k)} (\boldsymbol{x}) \Psi_i^{(k)} (\boldsymbol{x})  -  h_i^{(j)} (\boldsymbol{x}) \right)$
		\Else
		\State $u_i^{(j)} \gets \left( \boldsymbol{x} \mapsto \Psi_i^{(j)} (\boldsymbol{x}) \right)$
		\EndIf
		\EndIf
		\EndFor
		\EndFor
	\end{algorithmic}
\end{algorithm}

\subsection{Proofs}
Here we provide the proofs of  proposition~\ref{prop:bc-system} and theorem~\ref{thm:bcsatisfied}.

\subsubsection{Proof of proposition~\ref{prop:bc-system}}\label{sec:prop-proof}
Let $(i,j) \in I \times J$ and $\boldsymbol{x} \in \Gamma_i$
Since $\phi_i$ is a distance function with respect to $\Gamma_i$, we have $\phi_i (\boldsymbol{x}) = 0$.
Therefore, the generic solution structure $\boldsymbol{u}$ given by~\eqref{generic_solution_structure_system} is equal to $\boldsymbol{u}_i$.
Using this and substituting $\boldsymbol{u}_i$ by~\eqref{local_solution_structure_system_ansatz_combination_of_basis_functions} gets us
\begin{equation}
	\boldsymbol{b}_i^{(j)} (\boldsymbol{x}) \cdot \boldsymbol{u} (\boldsymbol{x}) = \boldsymbol{b}_i^{(j)} (\boldsymbol{x}) \cdot \boldsymbol{u}_i (\boldsymbol{x}) =  \boldsymbol{b}_i^{(j)} (\boldsymbol{x}) \cdot \sum_{k=1}^{n} \boldsymbol{b}_i^{(k)} (\boldsymbol{x}) u_i^{(k)} (\boldsymbol{x}) = u_i^{(j)} (\boldsymbol{x}),
\end{equation}
where we abused the fact that $\boldsymbol{b}_i^{(1)} (\boldsymbol{x}), \dots, \boldsymbol{b}_i^{(n)} (\boldsymbol{x})$ is an orthonormal basis.
This proves~\eqref{eq:prop_a}.
To prove~\eqref{eq:prop_b}, let $(\tilde{i},\tilde{j}) \in I \times J$ and let $\phi_{\tilde{i}}^{\mu_{\tilde{i}}}$ have a vanishing gradient and $\boldsymbol{x} \in \Gamma_{\tilde{i}}$.
We substitute the ansatz for $\boldsymbol{u}$ given by~\eqref{generic_solution_structure_system} into $\frac{\partial \left( \boldsymbol{b}_{\tilde{i}}^{(\tilde{j})} (\boldsymbol{x}) \cdot \boldsymbol{u} (\boldsymbol{x}) \right)}{\partial \boldsymbol{n}}$ and calculate
\begin{multline}
	\frac{\partial \left( \boldsymbol{b}_{\tilde{i}}^{(\tilde{j})} (\boldsymbol{x}) \cdot \boldsymbol{u} (\boldsymbol{x}) \right)}{\partial \boldsymbol{n}} \\
	= \frac{\partial \left( \boldsymbol{b}_{\tilde{i}}^{(\tilde{j})} (\boldsymbol{x}) \cdot \left(\sum_{i=1}^{M} w_i (\boldsymbol{x}) \boldsymbol{u}_i (\boldsymbol{x}) + [\Psi^{(1)}(\boldsymbol{x}), \dots, \Psi^{(n)}(\boldsymbol{x})]^T \prod_{i=1}^{M} \phi_i (\boldsymbol{x})^{\mu_i}\right) \right)}{\partial \boldsymbol{n}} \\
	= \frac{\partial \left( \sum_{i=1}^{M} w_i (\boldsymbol{x}) \boldsymbol{b}_{\tilde{i}}^{(\tilde{j})} (\boldsymbol{x}) \cdot \boldsymbol{u}_i (\boldsymbol{x}) \right)}{\partial \boldsymbol{n}} \\
	+ \frac{\partial \left( \boldsymbol{b}_{\tilde{i}}^{(\tilde{j})} (\boldsymbol{x}) \cdot [\Psi^{(1)}(\boldsymbol{x}), \dots, \Psi^{(n)}(\boldsymbol{x})]^T \prod_{i=1}^{M} \phi_i (\boldsymbol{x})^{\mu_i}\right)}{\partial \boldsymbol{n}}.
\end{multline}
Using product rule, we find that the second term is zero because $\phi_{\tilde{i}}^{\mu_{\tilde{i}}} (\boldsymbol{x}) = 0$ and $\frac{\partial \phi_{\tilde{i}}^{\mu_{\tilde{i}}} (\boldsymbol{x})}{\partial \boldsymbol{n}} = 0$.
The remaining first term can be simplified as follows
\begin{multline}
	\label{eq:proof_prop_robin_condition_u_tilde_i}
	\frac{\partial \left( \boldsymbol{b}_{\tilde{i}}^{(\tilde{j})} (\boldsymbol{x}) \cdot \boldsymbol{u} (\boldsymbol{x}) \right)}{\partial \boldsymbol{n}}  
	= \frac{\partial \left( \sum_{i=1}^{M} w_i (\boldsymbol{x}) \boldsymbol{b}_{\tilde{i}}^{(\tilde{j})} (\boldsymbol{x}) \cdot \boldsymbol{u}_i (\boldsymbol{x}) \right)}{\partial \boldsymbol{n}}\\
	= \sum_{i=1}^{M} \biggl( 
	\underbrace{\frac{\partial w_i(\boldsymbol{x})}{\partial \boldsymbol{n}}}_{=0} \boldsymbol{b}_{\tilde{i}}^{(\tilde{j})} (\boldsymbol{x}) \cdot \boldsymbol{u}_i (\boldsymbol{x}) 
	+ \underbrace{w_i(\boldsymbol{x})}_{=\begin{cases} 1, & \tilde{i} = i \\ 0, & \text{else} \end{cases}} \frac{\partial \boldsymbol{b}_{\tilde{i}}^{(\tilde{j})} (\boldsymbol{x}) \cdot \boldsymbol{u}_i (\boldsymbol{x})}{\partial \boldsymbol{n}} 
	\biggr)\\
	= \frac{\partial \left( \boldsymbol{b}_{\tilde{i}}^{(\tilde{j})} (\boldsymbol{x}) \cdot \boldsymbol{u}_{\tilde{i}} (\boldsymbol{x}) \right)}{\partial \boldsymbol{n}}.
\end{multline}
To show that $\frac{\partial w_i(\boldsymbol{x})}{\partial \boldsymbol{n}} = 0$, we first consider the case that $i \neq \tilde{i}$ and calculate
\begin{equation}
	\frac{\partial w_i(\boldsymbol{x})}{\partial \boldsymbol{n}}
	= \frac{\partial \frac{ \prod_{j=1, j \neq i}^M \phi_j (\boldsymbol{x})^{\mu_j} }{ \sum_{k=1}^{M} \prod_{j=1, j \neq k}^M \phi_j (\boldsymbol{x})^{\mu_j} }}{\partial \boldsymbol{n}}
	= \frac{\partial \phi_{\tilde{i}}^{\mu_{\tilde{i}}} (\boldsymbol{x}) \frac{ \prod_{j=1, j \neq i, j \neq \tilde{i}}^M \phi_j (\boldsymbol{x})^{\mu_j} }{ \sum_{k=1}^{M} \prod_{j=1, j \neq k}^M \phi_j (\boldsymbol{x})^{\mu_j} }}{\partial \boldsymbol{n}} 
	\eqqcolon \frac{\partial \phi_{\tilde{i}}^{\mu_{\tilde{i}}} (\boldsymbol{x}) f(\boldsymbol{x}) }{\partial \boldsymbol{n}},
\end{equation}
where we introduce the function $f$ for simplification.
We use product rule and get
\begin{equation}
	\frac{\partial \phi_{\tilde{i}}^{\mu_{\tilde{i}}} (\boldsymbol{x}) f(\boldsymbol{x}) }{\partial \boldsymbol{n}}
	= \underbrace{\frac{\partial \phi_{\tilde{i}}^{\mu_{\tilde{i}}} (\boldsymbol{x}) }{\partial \boldsymbol{n}}}_{=0}
	f (\boldsymbol{x})
	+ \underbrace{\phi_{\tilde{i}}^{\mu_{\tilde{i}}} (\boldsymbol{x})}_{=0} 
	\frac{\partial f(\boldsymbol{x})}{\partial \boldsymbol{n}}
	= 0.
\end{equation}
We now consider the case that $i = \tilde{i}$ and calculate
\begin{multline}
	\frac{\partial w_i(\boldsymbol{x})}{\partial \boldsymbol{n}}
	= \frac{\partial \frac{ \prod_{j=1, j \neq i}^M \phi_j (\boldsymbol{x})^{\mu_j} }{ \sum_{k=1}^{M} \prod_{j=1, j \neq k}^M \phi_j (\boldsymbol{x})^{\mu_j} }}{\partial \boldsymbol{n}}
	= \frac{\partial \frac{ \prod_{j=1, j \neq i}^M \phi_j (\boldsymbol{x})^{\mu_j} }{ \prod_{j=1, j \neq i}^M \phi_j (\boldsymbol{x})^{\mu_j} + \sum_{k=1, k\neq i}^{M} \prod_{j=1, j \neq k}^M \phi_j (\boldsymbol{x})^{\mu_j} }}{\partial \boldsymbol{n}} \\
	\eqqcolon \frac{\partial \frac{u(\boldsymbol{x})}{u(\boldsymbol{x}) + v (\boldsymbol{x})} }{\partial \boldsymbol{n}}
\end{multline}
Next we use the quotient rule to find
\begin{equation}
	\frac{\partial \frac{u(\boldsymbol{x})}{u(\boldsymbol{x}) + v (\boldsymbol{x})} }{\partial \boldsymbol{n}}
	= \frac{ 
		\frac{\partial u(\boldsymbol{x})}{\partial \boldsymbol{n}} (u(\boldsymbol{x}) + v (\boldsymbol{x}))
		- u(\boldsymbol{x}) \frac{\partial ( u(\boldsymbol{x}) + v (\boldsymbol{x})) }{\partial \boldsymbol{n}}
	}{ (u(\boldsymbol{x}) + v (\boldsymbol{x}))^2 }
	= \frac{ 
		\frac{\partial u(\boldsymbol{x})}{\partial \boldsymbol{n}} v (\boldsymbol{x})
		- u(\boldsymbol{x}) \frac{\partial v (\boldsymbol{x})}{\partial \boldsymbol{n}}
	}{ (u(\boldsymbol{x}) + v (\boldsymbol{x}))^2 } = 0,
\end{equation}
using $v(\boldsymbol{x}) = \frac{\partial v(\boldsymbol{x})}{\partial \boldsymbol{n}} = 0$ in the last step.
This is because we can factor out $\phi_i(\boldsymbol{x})^{\mu_i} (\boldsymbol{x})$ as follows
\begin{equation}
	v(\boldsymbol{x}) = \sum_{k=1, k\neq i}^{M} \prod_{j=1, j \neq k}^M \phi_j (\boldsymbol{x})^{\mu_j} = \phi_i(\boldsymbol{x})^{\mu_i} \sum_{k=1, k\neq i}^{M} \prod_{j=1, j \neq k, j\neq i}^M \phi_j (\boldsymbol{x})^{\mu_j},
\end{equation}
and that $\phi_i(\boldsymbol{x})^{\mu_i}= \frac{\partial \phi_i(\boldsymbol{x})^{\mu_i}}{\partial \boldsymbol{n}} = 0$.
Now we can further simplifyg~\eqref{eq:proof_prop_robin_condition_u_tilde_i} by substituting the ansatz for $\boldsymbol{u}_{\tilde{i}}$ given by~\eqref{local_solution_structure_system_ansatz_combination_of_basis_functions}, so that
\begin{equation}
	\frac{\partial \left( \boldsymbol{b}_{\tilde{i}}^{(\tilde{j})} (\boldsymbol{x}) \cdot \boldsymbol{u} (\boldsymbol{x}) \right)}{\partial \boldsymbol{n}}  
	= \frac{\partial \left( \boldsymbol{b}_{\tilde{i}}^{(\tilde{j})} (\boldsymbol{x}) \cdot \boldsymbol{u}_{\tilde{i}} (\boldsymbol{x}) \right)}{\partial \boldsymbol{n}}
	= \frac{\partial \left( \boldsymbol{b}_{\tilde{i}}^{(\tilde{j})} (\boldsymbol{x}) \cdot \sum_{j=1}^{n} \boldsymbol{b}_{\tilde{i}}^{(j)} (\boldsymbol{x}) u_{\tilde{i}}^{(j)} (\boldsymbol{x}) \right)}{\partial \boldsymbol{n}}.
\end{equation}
For further simplification, we use that $\boldsymbol{b}_{\tilde{i}}^{(\tilde{j})} (\boldsymbol{x}) \cdot \boldsymbol{b}_{\tilde{i}}^{(j)} (\boldsymbol{x}) = \delta_{\tilde{j} j}$ not only holds on $\Gamma_{\tilde{i}}$ but in an open neighbourhood of $\Gamma_{\tilde{i}}$.
Thus, we can say
\begin{equation}
	\frac{\partial \left( \boldsymbol{b}_{\tilde{i}}^{(\tilde{j})} (\boldsymbol{x}) \cdot \boldsymbol{u} (\boldsymbol{x}) \right)}{\partial \boldsymbol{n}} 
	= \frac{\partial \left( \sum_{j=1}^{n} \boldsymbol{b}_{\tilde{i}}^{(\tilde{j})} (\boldsymbol{x}) \cdot \boldsymbol{b}_{\tilde{i}}^{(j)} (\boldsymbol{x}) u_{\tilde{i}}^{(j)} (\boldsymbol{x}) \right)}{\partial \boldsymbol{n}}
	= \frac{\partial u_{\tilde{i}}^{(\tilde{j})} (\boldsymbol{x})}{\partial \boldsymbol{n}},
\end{equation}
which is what we wanted to show.

\subsubsection{Proof that the solution structure satisfies the boundary conditions}\label{sec:appendix_proof}
We want to show that the solution structure~\eqref{generic_solution_structure_system} satisfies the boundary conditions with local solution structures being chosen either according to the GLSS or the OP approach.
We begin by proving that the solution structure satisfies the Dirichlet boundary conditions.
Let $(i,j) \in \mathit{IJ}_D$ and $\boldsymbol{x} \in \Gamma_i$.
Using Proposition \ref{prop:bc-system} and substituting~\eqref{eq:ansatz_local_solution_structure_dirichlet_system_glss_op} for the local solution structure results in
\begin{equation}
	\boldsymbol{b}_i^{(j)} (\boldsymbol{x}) \cdot \boldsymbol{u} (\boldsymbol{x}) = u_i^{(j)} (\boldsymbol{x}) = \begin{cases}
		g_i^{(j)} (\boldsymbol{x}) + \bar{\phi}_i (\boldsymbol{x}) \tilde{\Psi}_i^{(j)} (\boldsymbol{x}), & \phi_{i}^{\mu_i} \text{ has a vanishing gradient}, \\
		g_i^{(j)} (\boldsymbol{x}), & \text{else.}
	\end{cases}
\end{equation}
Because of $\bar{\phi}_i (\boldsymbol{x}) = 0$ for $\boldsymbol{x} \in \Gamma_i$, it follows that 
\begin{equation}
	\boldsymbol{b}_i^{(j)} (\boldsymbol{x}) \cdot \boldsymbol{u} (\boldsymbol{x}) = g_i^{(j)} (\boldsymbol{x}),
\end{equation}
which is what we wanted to show.

To show that the Robin conditions are satisfied, let $(i,j) \in \mathit{IJ}_R$ and $\boldsymbol{x} \in \Gamma_i$.
By Proposition~\ref{prop:bc-system} we have
\begin{equation} \label{eq:prop_robin_in_proof_appendix}
	\frac{\partial \left( \boldsymbol{b}_i^{(j)} (\boldsymbol{x}) \cdot \boldsymbol{u} (\boldsymbol{x}) \right)}{\partial \boldsymbol{n}} 
	= \frac{\partial u_i^{(j)} (\boldsymbol{x})}{\partial \boldsymbol{n}}.
\end{equation}
First we consider the GLSS approach.
Using~\eqref{eq:ansatz_local_solution_structure_system_glss} for the local solution structure according yields
\begin{equation}
	\frac{\partial \left( \boldsymbol{b}_i^{(j)} (\boldsymbol{x}) \cdot \boldsymbol{u} (\boldsymbol{x}) \right)}{\partial \boldsymbol{n}} 
	= \frac{\partial \left( \Psi_i^{(j)} (\boldsymbol{x}) - \bar{\phi}_i (\boldsymbol{x}) \nabla \bar{\phi}_i (\boldsymbol{x}) \cdot \nabla \Psi_i^{(j)} (\boldsymbol{x}) \right) }{\partial \boldsymbol{n}}
	+ \frac{\partial \left( \bar{\phi}_i (\boldsymbol{x}) f_i^{(j)} (\boldsymbol{x}) \right) }{\partial \boldsymbol{n}}.
\end{equation}
The first term is zero because
\begin{multline}
	\frac{\partial \left( \Psi_i^{(j)} (\boldsymbol{x}) - \bar{\phi}_i (\boldsymbol{x}) \nabla \bar{\phi}_i (\boldsymbol{x}) \cdot \nabla \Psi_i^{(j)} (\boldsymbol{x}) \right) }{\partial \boldsymbol{n}}
	= \frac{\partial \Psi_i^{(j)} (\boldsymbol{x})}{\partial \boldsymbol{n}}
	- \underbrace{\frac{\partial \bar{\phi}_i (\boldsymbol{x})}{\partial \boldsymbol{n}}}_{=-1}
	\underbrace{\nabla \bar{\phi}_i (\boldsymbol{x}) \cdot \nabla \Psi_i^{(j)} (\boldsymbol{x})}_{=-\frac{\partial \Psi_i^{(j)} (\boldsymbol{x})}{\partial \boldsymbol{n}}} \\
	- \underbrace{\bar{\phi}_i (\boldsymbol{x})}_{=0}
	\frac{\partial \nabla \bar{\phi}_i (\boldsymbol{x}) \cdot \nabla \Psi_i^{(j)} (\boldsymbol{x})}{\partial \boldsymbol{n}}
	= 0.
\end{multline}
Thus, we get
\begin{equation}
	\begin{split}
	\frac{\partial \left( \boldsymbol{b}_i^{(j)} (\boldsymbol{x}) \cdot \boldsymbol{u} (\boldsymbol{x}) \right)}{\partial \boldsymbol{n}} 
	= \frac{\partial \left( \bar{\phi}_i (\boldsymbol{x}) f_i^{(j)} (\boldsymbol{x}) \right) }{\partial \boldsymbol{n}}
	= \underbrace{\frac{\partial \bar{\phi}_i (\boldsymbol{x})}{\partial \boldsymbol{n}}}_{=-1}
	f_i^{(j)} (\boldsymbol{x})
	+ \underbrace{\bar{\phi}_i (\boldsymbol{x})}_{=0} 
	\frac{\partial f_i^{(j)} (\boldsymbol{x})}{\partial \boldsymbol{n}}\\
	= -f_i^{(j)} (\boldsymbol{x}).
	\end{split}
\end{equation}
Substituting~\eqref{eq:ansatz_f_i_j} for  $f_i^{(j)}$ yields
\begin{multline} \label{eq:proof_GLSS_satisfies_Robin_condition}
	\frac{\partial \left( \boldsymbol{b}_i^{(j)} (\boldsymbol{x}) \cdot \boldsymbol{u} (\boldsymbol{x}) \right)}{\partial \boldsymbol{n}} 
	= - \boldsymbol{c}_i^{(j)} (\boldsymbol{x}) \cdot  
	\sum_{k=1, (i,k) \not\in \mathit{IJ}_D}^{n} \boldsymbol{b}_i^{(k)} (\boldsymbol{x}) \Psi_i^{(k)} (\boldsymbol{x})
	+ h_i^{(j)} (\boldsymbol{x}) \\
	= - \boldsymbol{c}_i^{(j)} (\boldsymbol{x}) \cdot  
	\sum_{k=1, (i,k) \not\in \mathit{IJ}_D}^{n} \boldsymbol{b}_i^{(k)} (\boldsymbol{x}) \Psi_i^{(k)} (\boldsymbol{x})
	\underbrace{- \boldsymbol{c}_i^{(j)} (\boldsymbol{x}) \cdot
		\sum_{k=1, (i,k) \in \mathit{IJ}_D}^{n} \boldsymbol{b}_i^{(k)} (\boldsymbol{x}) u_i^{(k)} (\boldsymbol{x})}_{=0 \text{ because } \boldsymbol{c}_i^{(j)} (\boldsymbol{x}) \in \Span(\{ \boldsymbol{b}_i^{(k)} (\boldsymbol{x}) | (i,k) \not \in \mathit{IJ}_D \})}\\
	+ h_i^{(j)} (\boldsymbol{x}) \\
	= -  \boldsymbol{c}_i^{(j)} (\boldsymbol{x}) \cdot  
	\underbrace{\sum_{k=1}^{n} \boldsymbol{b}_i^{(k)} (\boldsymbol{x}) u_i^{(k)} (\boldsymbol{x})}_{\boldsymbol{u}_i(\boldsymbol{x})=\boldsymbol{u}(\boldsymbol{x}) \text{ on } \Gamma_i}
	+h_i^{(j)} (\boldsymbol{x}) = -  \boldsymbol{c}_i^{(j)} (\boldsymbol{x}) \cdot  \boldsymbol{u} (\boldsymbol{x}) +h_i^{(j)} (\boldsymbol{x}),
\end{multline}
which is what we wanted to show.
Now, we want to show the same for the case that the local solution structure has been choosen according to OP approach.
We use $u_i^{(j)}$ from~\eqref{eq:prop_robin_in_proof_appendix} in~\eqref{eq:ansatz_local_solution_structure_op_system} to get
\begin{equation}
	\frac{\partial \left( \boldsymbol{b}_i^{(j)} (\boldsymbol{x}) \cdot \boldsymbol{u} (\boldsymbol{x}) \right)}{\partial \boldsymbol{n}} 
	= \frac{\partial \bar{\Psi}^{(j)} ( \mathcal{N} (\boldsymbol{x}; \bar{\phi}_i) ) }{\partial \boldsymbol{n}}
	+ \frac{\partial  \bar{\phi}_i (\boldsymbol{x}) f_i^{(j)} ( \mathcal{N} (\boldsymbol{x}; \bar{\phi}_i) )  }{\partial \boldsymbol{n}}.
\end{equation}
That the first term is zero can be seen by applying the chain rule and using that
\begin{equation}
	\frac{\partial \mathcal{N} (\boldsymbol{x}; \bar{\phi}_i)}{\partial \boldsymbol{n}}
	= \frac{\partial \boldsymbol{x} }{\partial \boldsymbol{n}}
	- \frac{\partial \bar{\phi}_i (\boldsymbol{x}) \nabla \bar{\phi}_i (\boldsymbol{x})}{\partial \boldsymbol{n}}
	= \boldsymbol{n} 
	- \underbrace{\frac{\partial \bar{\phi}_i (\boldsymbol{x})}{\partial \boldsymbol{n}}}_{=-1} 
	\underbrace{\nabla \bar{\phi}_i (\boldsymbol{x})}_{=-\boldsymbol{n}}
	- \underbrace{\bar{\phi}_i (\boldsymbol{x})}_{=0} 
	\frac{\partial \nabla \bar{\phi}_i (\boldsymbol{x})}{\partial \boldsymbol{n}}
	= \boldsymbol{0}.
\end{equation}
Similar as for GLSS, we conclude that
\begin{equation}
	\frac{\partial \left( \boldsymbol{b}_i^{(j)} (\boldsymbol{x}) \cdot \boldsymbol{u} (\boldsymbol{x}) \right)}{\partial \boldsymbol{n}} 
	= - f_i^{(j)} (\underbrace{\mathcal{N} (\boldsymbol{x}; \bar{\phi}_i)}_{=\boldsymbol{x}}),
\end{equation} 
In the same way wie did in~\eqref{eq:proof_GLSS_satisfies_Robin_condition}, we can conclude that the solution structure satisfies the boundary condition.
Note that because $\mathcal{N} (\boldsymbol{x}; \bar{\phi}_i) = \boldsymbol{x}$ holds for $\boldsymbol{x} \in \Gamma_i$, the orthogonal projection $\mathcal{N} (\boldsymbol{x}; \bar{\phi}_i)$ does not need to be applied to the function $f_i^{(j)}$ but only to the function $\bar{\Psi}^{(j)}$.

\section{Numerical results}\label{sec:examples}
We use the neural operator architecture from~\cite{li21}, which has the form
\begin{equation}
	\label{eq:pino}
	\mathcal{G}_{\boldsymbol{\theta}} = \mathcal{Q} \circ \sigma(\mathcal{W}_L + \mathcal{K}_L) \circ \cdots \circ \sigma(\mathcal{W}_1 + \mathcal{K}_1) \circ \mathcal{L},
\end{equation}
where $\mathcal{L}$ and $\mathcal{Q}$ are pointwise operators, $\mathcal{L}$ lifts the input function to a higher dimensional space and $\mathcal{Q}$ projects its input function back to lower dimensional space.
In between, each layer $\sigma(\mathcal{W}_l + \mathcal{K}_l)$ consists of a pointwise linear operator $\mathcal{W}_l$ and a kernel integral operator $\mathcal{K}_l$.
We use Fourier convolution operators for $\mathcal{K}_l$. 
Details are given by~\cite{li20b} and~\cite{li21}.
We use the  FNO  as implemented in \texttt{FNN2d}.\footnote{\url{https://github.com/neuraloperator/physics_informed/blob/Grad-ckpt/models/fourier2d.py}}
For all experiments, we use the architecture shown in~\eqref{eq:pino} with four Fourier convolution layers and GeLU activation functions. 
In every Fourier layer we have $20$ modes for each dimension.
Each of the four layers $\sigma (\mathcal{W}_l + \mathcal{K}_l)$ works in $64$-dimensional space, which requires that the layer $\mathcal{L}$ lifts the input to $64$-dimensional space.
Further, $\text{fcdim}=128$ is the dimension that $\mathcal{Q}$ lifts its inputs up to before projecting them back to the output dimension.

The computational domain for the FNO is required to be rectangular.
In experiments where this is not the case, we use a bounding box of the domain as computational domain for the FNO.
The rectangular domains are discretized on uniform grids.
For the Darcy flow problem the resolution of the uniform grid is $101 \times 101$ and $441 \times 83$ for the Navier-Stokes problem.
Derivatives in solution structures or losses are computed with finite differences on the uniform grids.

All four convolution layers together have $13,123,840$ trainable parameters in all experiments.
The lifting layer $\mathcal{L}$ has $256$ trainable parameters.
The only change in size is in the output dimension $I$ of $\mathcal{Q}$ and thus the dimensionality of the mapping $\mathcal{G}_{\boldsymbol{\theta}} (\boldsymbol{a}) : \Omega \rightarrow \mathbb{R}^{I}$.
Dimension $I$ depends on the used approach for the boundary conditions. 
For weakly enforced boundary conditions, where $\mathcal{G}_{\boldsymbol{\theta}} (\boldsymbol{a}) = \boldsymbol{u}$ is the solution, $I$ is equal to the dimension of the solution, that is $I=1$ for the scalar Darcy flow and $I=3$ for the Navier-Stokes equations where $\boldsymbol{u}$ contains two velocity components and the pressure.
For GLSS, OP and semi-weak boundary conditions, which rely on a solution structure, $I$ is equal to the number of functions $\Psi_i$ that need to be trained.
In our experiments, the $\mathcal{Q}$ has $8320 + 129 I$ trainable parameters, see Table~\ref{tab:training} for the overall number for the different boundary conditions.

Training uses the Adam optimizer~\cite{Kingma17} provided as part of~\cite{li21}'s FNO implementation.\footnote{\url{https://github.com/neuraloperator/physics_informed/blob/Grad-ckpt/train_utils/adam.py}}
All models are trained for a fixed number of total epochs.
In the training process, the learning rate is reduced by half at every milestone, see Table~\ref{tab:trainingparam}.
\begin{table}
	\centering
	\caption{Parameters used to train the FNO.}\label{tab:trainingparam}
	\begin{tabular}{llll}
		\toprule
		& \multicolumn{3}{l}{Darcy flow}
		\\
		\midrule
		& starting lr     & epochs per milestone     &  total epochs \\
		\cmidrule(r){2-4}
		PINN-like training     & 0.0025 & 200 & 1000 (each parameter) \\
		Operator training  & 0.005 & 100 & 1000 \\
		Finetuning   & 0.0025 & 200 & 1000 (each parameter) \\
		\midrule
		& \multicolumn{3}{l}{Navier-Stokes equations}
		\\
		\midrule
		& starting lr     & epochs per milestone     &  total epochs\\
		\cmidrule(r){2-4}
		PINN-like training     & 0.005 & 500 & 4000  \\
		\bottomrule
	\end{tabular}
\end{table}

All numerical experiments were conducted on a desktop computer with an Intel® Core™ i7-4790K CPU, 16 GB of RAM, and an NVIDIA GeForce GTX 960 GPU (4 GB).

\begin{table}[t]
		\centering
		\caption{Size, training and inference times of the FNO for the four different approaches to enforce boundary conditions for the Darcy flow and Navier-Stokes equations.}\label{tab:training}
	\begin{tabular}{ccccc}
	 & Trainable & Checkpoint size  & Training time  & Inference time \\
	 & parameters& (MByte) & (min) &  (sec)  \\ \hline
	 \multicolumn{5}{c}{Darcy flow} \\ \hline
	GLSS        & 13,132,932 & 105 & 182.35 & 0.0130 \\
	OP          & 13,132,674 & 105 & 181.68 & 0.0120 \\
	Semi-weak   & 13,132,545 & 105 & 180.79 & 0.0104 \\
	Weak        & 13,132,545 & 105 & 181.44 & 0.0101 \\ \hline
    \multicolumn{5}{c}{Navier-Stokes equations} \\ \hline  
	GLSS      & 13,133,706         & 105 & 9.79 & 0.0298 \\
	OP        & 13,133,190         & 105 & 9.76 & 0.0285 \\
	Semi-weak & 13,132,803         & 105 & 9.49 & 0.0248 \\
	Weak      & 13,132,803         & 105 & 9.62 & 0.0238 \\ \hline
	\end{tabular}	
\end{table}

Table~\ref{tab:training} shows different training-related parameters of the networks arising from the four approaches for the two benchmarks.
Because the size of the network is dominated by the size of the four convolution layers, the number of trainable parameters varies only very slightly.
There is no discernible impact on the size of the checkpoint files.
Training times are stable, with semi-weak boundary conditions training the fastest in both cases but the difference to the slowest GLSS is below $3\%$.
Inference times increase for GLSS and OP compared to weakly enforced boundary conditions.
We see the largest increase by about 29\% for GLSS for the Darcy flow.
Note that training for the Navier-Stokes equations is much faster because we train only solutions and no solution operator.
\subsection{Darcy Flow}\label{subsec:darcyy}
As scalar test problem, we consider the Darcy flow equation governing fluid flow in porous media~\cite{darcy}.
The governing PDE reads
\begin{equation}
	\label{eq:darcy_flow}
	- \nabla \cdot (a(x,y) \nabla u(x,y)) = f(x,y)
\end{equation}
Here, $a(x,y)$ is the inhomogeneous and anisotropic diffusivity and $f$ an inhomogeneous source.
For a pair of parameters $(\alpha, \beta) \in [1,10)^2$ we set the diffusivity in~\eqref{eq:darcy_flow} to
\begin{equation}
	\label{eq:darcy_flow_parameter}
	a(x,y) \coloneqq \sin(\alpha x) \sin(\beta y)
\end{equation}
and use the method of manufactured solutions to construct $f$ such that
\begin{equation}
	\label{eq:darcy_flow_u}
	u(x,y) = \sin(\alpha x) \cos(\beta y)
\end{equation}
becomes the sought solution. 
By inserting $a(x,y)$ from~\eqref{eq:darcy_flow_parameter} into~\eqref{eq:darcy_flow} we find
\begin{equation}
	f(x,y) = -\frac{1}{2}\sin(2 \beta y) (\alpha^2 \cos(2 \alpha x) + \beta^2 \cos(2 \alpha x) - \beta^2).
\end{equation}
Training data is generated by generating 500 pairs $(\alpha_i, \beta_i)$ from $[1,10)^2$ by uniform sampling.
The operator $\mathcal{G}$ is trained on 400 of those pairs.
For testing, it is evaluated on the remaining 100 pairs. 
The produced solution is compared against the analytic solution~\eqref{eq:darcy_flow_u} and the average $l_2$-error over those 100 pairs is computed.
Finetuning trains for the specific solution for each of the 100 remaining parameter pairs independently, computes the $l_2$ error against~\eqref{eq:darcy_flow_u} and averages over all 100 $l_2$-errors.
PINN-style training also trains the solutions for each of the 100 parameter pairs independently but does not start from the operator that was trained on the first 400 pairs but from the untrained network instead.
The first 400 pairs remain unused in this case.

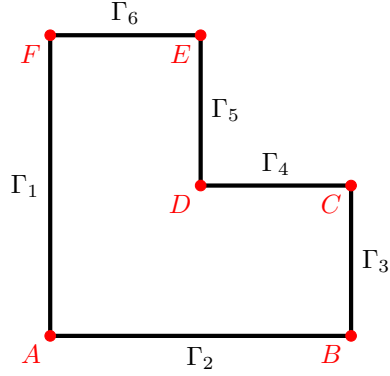
\begin{figure}
	\centering
	\begin{tikzpicture}
		\draw[ultra thick] (0,4)-- node[left] {$\Gamma_{1}$} (0,0);
		\draw[ultra thick] (0,0)-- node[below] {$\Gamma_{2}$} (4,0);
		\draw[ultra thick] (4,0)-- node[right] {$\Gamma_{3}$} (4,2);
		\draw[ultra thick] (2,2)-- node[above] {$\Gamma_{4}$} (4,2);
		\draw[ultra thick] (2,2)-- node[right] {$\Gamma_{5}$} (2,4);
		\draw[ultra thick] (0,4)-- node[above] {$\Gamma_{6}$} (2,4);
		
		\filldraw [red] (0,0) circle (2pt) node[below left] {$A$};
		\filldraw [red] (4,0) circle (2pt) node[below left] {$B$};
		\filldraw [red] (4,2) circle (2pt) node[below left] {$C$};
		\filldraw [red] (2,2) circle (2pt) node[below left] {$D$};
		\filldraw [red] (2,4) circle (2pt) node[below left] {$E$};
		\filldraw [red] (0,4) circle (2pt) node[below left] {$F$};
	\end{tikzpicture}
	\caption{Computational domain for Darcy flow. Dirichlet data is prescribed on $\Gamma_1, \Gamma_2$, Neumann boundary conditions are set for $\Gamma_3, \Gamma_4$, and Robin conditions on $\Gamma_5,\Gamma_6$.}
	\label{fig:L_shape}
\end{figure}

We consider the L-shaped domain shown in Figure~\ref{fig:L_shape} and derive the following boundary conditions from the analytic solution~\eqref{eq:darcy_flow_u}\begin{align}
	\forall (x,y) & \in \Gamma_1 : &
	u(x,y) & = g_1(x,y) = 0 \\
	\forall (x,y) & \in \Gamma_2 : &
	u(x,y) & = g_2(x,y) = \sin(\alpha x) \\
	\forall (x,y) & \in \Gamma_3 : &
	\frac{\partial u(x,y)}{\partial \boldsymbol{n}} & = h_3(x,y) = \alpha \cos(\alpha) \cos(\beta y) \\
	\forall (x,y) & \in \Gamma_4 : &
	\frac{\partial u(x,y)}{\partial \boldsymbol{n}} & = h_4(x,y) = -\beta \sin(\alpha x) \sin(0.5 \beta) \\
	\forall (x,y) & \in \Gamma_5 : &
	\frac{\partial u(x,y)}{\partial \boldsymbol{n}} + u(x,y) & = h_5(x,y) = ( \alpha \cos(0.5 \alpha) + \sin(0.5 \alpha) ) \cos(\beta y) \\
	\forall (x,y) & \in \Gamma_6 : &
	\frac{\partial u(x,y)}{\partial \boldsymbol{n}} + u(x,y) & = h_6(x,y) = (\cos(\beta) - \beta \sin(\beta)) \sin(\alpha x).
\end{align}
The resulting solution structure is
\begin{multline}
	u(x,y) = \\\sum_{i=1}^{6} w_i (x,y) u_i(x,y)
	+ \Psi (x,y) \phi_1 (x,y) \phi_2 (x,y) \phi_3 (x,y)^2 \phi_4 (x,y)^2 \phi_5 (x,y)^2 \phi_6 (x,y)^2.
\end{multline}
The local solution structures $u_i$ are choosen depending on whether GLSS or OP is used.

\paragraph{GLSS.} Here, the local solution structures are
\begin{align*}
	u_1 (x,y) & = g_1 (x,y), \\
	u_2 (x,y) & = g_2 (x,y), 
\end{align*}
for Dirichlet boundary conditions according to~\eqref{local_solution_structure_Dirichlet_scalar}, 
\begin{align*}
	u_3 (x,y) & = \Psi_3 (x,y) - \bar{\phi}_3 (x,y) ( \nabla \bar{\phi}_3 (x,y) \cdot \nabla \Psi_3 (x,y) + h_3 (x,y) ), \\
	u_4 (x,y) & = \Psi_4 (x,y) - \bar{\phi}_4 (x,y) ( \nabla \bar{\phi}_4 (x,y) \cdot \nabla \Psi_4 (x,y) + h_4 (x,y) ), 
\end{align*}
for Neumann boundary conditions and
\begin{align*}
	u_5 (x,y) & = \Psi_5 (x,y) (1+\bar{\phi}_5 (x,y)) - \bar{\phi}_5 (x,y) ( \nabla \bar{\phi}_5 (x,y) \cdot \nabla \Psi_5 (x,y) + h_5 (x,y) ), \\
	u_6 (x,y) & = \Psi_6 (x,y) (1+\bar{\phi}_6 (x,y)) - \bar{\phi}_6 (x,y) ( \nabla \bar{\phi}_6 (x,y) \cdot \nabla \Psi_6 (x,y) + h_6 (x,y) )
\end{align*}
for Robin boundary conditions according to~\eqref{local_solution_structure_Robin_scalar}.
The functions $\Psi_3$, $\Psi_4$, $\Psi_5$ and $\Psi_6$ are not unknowns because their corresponding boundary segements have intersection points with other segements.
That is why, we choose them as given in~\eqref{eq:Psi_i_GLSS_A_B} and introduce new unknowns $\Psi_C$, $\Psi_D$ and $\Psi_E$ that correspond to the intersection points.
With that, we set 
\begin{align}
	\Psi_3 (x,y) & = \frac{\phi_C (x,y)}{\phi_B (x,y) + \phi_C (x,y)} g_2(x,y)
	+ \frac{\phi_B (x,y)}{\phi_B (x,y) + \phi_C (x,y)} \Psi_C (x,y), \\
	\Psi_4 (x,y) & = \frac{\phi_D (x,y)}{\phi_C (x,y) + \phi_D (x,y)} \Psi_C(x,y)
	+ \frac{\phi_C (x,y)}{\phi_C (x,y) + \phi_D (x,y)} \Psi_D (x,y), \\
	\Psi_5 (x,y) & = \frac{\phi_E (x,y)}{\phi_D (x,y) + \phi_E (x,y)} \Psi_D(x,y)
	+ \frac{\phi_D (x,y)}{\phi_D (x,y) + \phi_E (x,y)} \Psi_E (x,y), \\
	\Psi_6 (x,y) & = \frac{\phi_F (x,y)}{\phi_E (x,y) + \phi_F (x,y)} \Psi_E(x,y)
	+ \frac{\phi_E (x,y)}{\phi_E (x,y) + \phi_F (x,y)} g_1 (x,y).
\end{align}
The unknown functions $\Psi$, $\Psi_C$, $\Psi_D$ and $\Psi_E$ are approximated by the PINO.

\paragraph{OP.} Here, the local solution structures are
\begin{align*}
	u_1 (x,y) & = g_1 (x,y), \\
	u_2 (x,y) & = g_2 (x,y), 
\end{align*}
for Dirichlet boundary conditions,
\begin{align*}  
	u_3 (x,y) & = \bar{\Psi} (\mathcal{N}(x,y;\bar{\phi}_3)) - \bar{\phi}_3 (x,y) h_3 (x,y), \\
	u_4 (x,y) & = \bar{\Psi} (\mathcal{N}(x,y;\bar{\phi}_4)) - \bar{\phi}_4 (x,y) h_4 (x,y), 
\end{align*}
for Neumann boundary conditions and  
\begin{align*}
	u_5 (x,y) & = \bar{\Psi} (\mathcal{N}(x,y;\bar{\phi}_5)) (1+\bar{\phi}_5 (x,y)) - \bar{\phi}_5 (x,y) h_5 (x,y),  \\
	u_6 (x,y) & = \bar{\Psi} (\mathcal{N}(x,y;\bar{\phi}_6)) (1+\bar{\phi}_6 (x,y)) - \bar{\phi}_6 (x,y) h_6 (x,y)
\end{align*}
for Robin boundary conditions according to~\eqref{local_solution_structure_robin_condition_op}.
We ensure that $\bar{\Psi}$ satisfies all Dirichlet conditions by following~\eqref{eq:scalar_op_psi_i_dirichlet_fitting} and setting 
\begin{multline}
	\bar{\Psi} (x,y) = \frac{\phi_2 (x,y)}{\phi_1 (x,y) + \phi_2 (x,y)} g_1 (x,y) + \frac{\phi_1 (x,y)}{\phi_1 (x,y) + \phi_2 (x,y)} g_2 (x,y) \\+ \phi_1 (x,y) \phi_2 (x,y) \tilde{\Psi} (x,y),
\end{multline}
where $\Psi$ and $\tilde{\Psi}$ need to be trained.

Figure~\ref{fig:darcy_training} shows training loss (upper) and validation error (lower).
The left column shows the loss and error curve for the PINN-like training.
Both these curves are the average loss and $l^2$-error over the 100 parameters that the PINO was trained on individually in PINN-style.
The middle column shows the loss and error curve for PINO trained on 400 parameters.
The right column shows the loss and error curve for finetuning,
Dotted lines indicate the loss and error value at the very beginning of the finetuning.

All cases train reasonably well, reducing the loss function by at least one order of magnitude (OP for PINN-like training) and two or more orders in most cases.
Losses are not indicative of achieved validation errors.
For the PINN-style training, OP and GLSS are more accurate than weak or semi-weak boundary conditions.
The same holds true for finetuning, where OP is slightly more accurate than GLSS.
For operator training, OP is more accurate than GLSS which performs on par with semi-weak and better than weak boundary conditions.
In summary, for the Darcy flow, even though losses do not necessarily decay faster, OP and GLSS in almost all cases produces more accurate solutions than weak or semi-weak boundary conditions.
\begin{figure}[t]
  \centering
  \includegraphics[width=0.9\textwidth]{./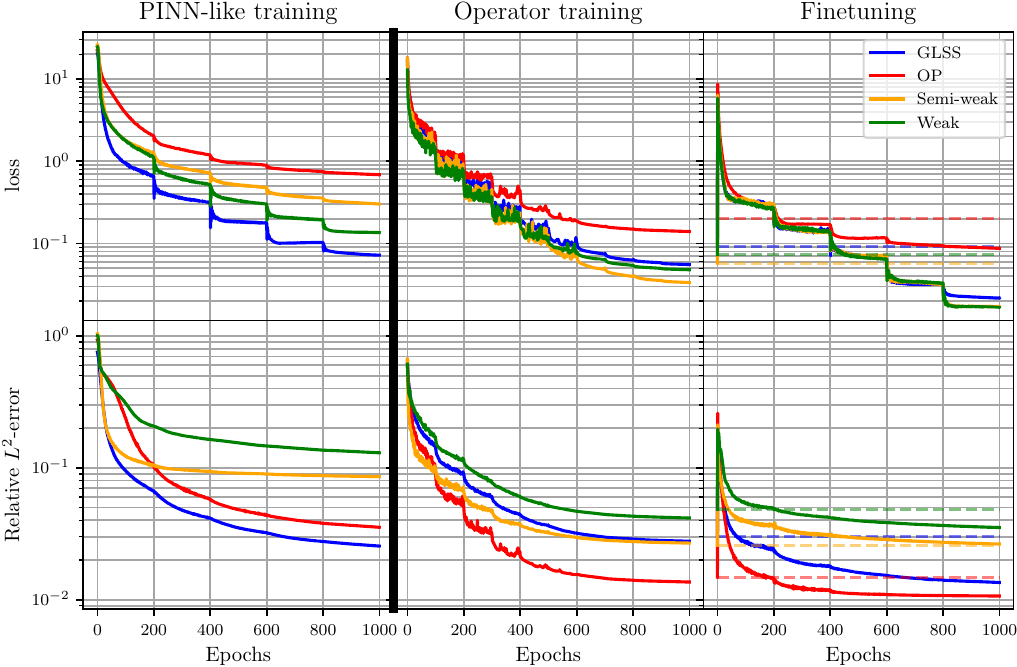}
  \caption{Training progress and errors on the validation set for different ways to enforce boundary conditions for the Darcy flow.}
  \label{fig:darcy_training}
\end{figure}
Table~\ref{operator_training_highest_lowest_median} shows the average $l_2$-error plus standard deviation (left column), best case $l_2$-error (middle column) and worst case $l_2$-error (right column).
For operator training and finetuning, OP is the most accurate approach whilst GLSS is the most accurate for PINN-like training.
For best case errors, shown in the middle column, there is no clear benefit from the two new approaches 
However, there are substantial gains in accuracy from OP and GLSS for the worst case in PINN-like training and finetuning and from OP in operator training.
Plots of the median, best- and worst-case solutions can be found in Appendix~\ref{sec:best-worst-darcy}.
\begin{table}[th]
	  \caption{$l_2$-errors of the predicted $u$ against the analytical solution for the four different approaches to enforce boundary conditions for the Darcy flow problem.}
	\label{operator_training_highest_lowest_median}  
  \centering
  \begin{tabular}{|l|lll|lll|lll|}
    \toprule
     & \multicolumn{3}{|l|}{Operator training} & \multicolumn{3}{|l|}{Finetuning} & \multicolumn{3}{|l|}{PINN-like training} \\
     \midrule
                       & Average     & Best  &  Worst  & Average  & Best   &  Worst  &  Average  & Best  &  Worst \\
    \midrule
    GLSS          &0.03$\pm$0.04&0.004& 0.27  & 0.01$\pm$0.01 & 0.003 & 0.06 &  0.02$\pm$0.02 & 0.005 & 0.08  \\
    OP               &0.02$\pm$0.01&0.003& 0.06  & 0.01$\pm$0.01 & 0.002  & 0.04  &  0.04$\pm$0.02 & 0.006 & 0.11  \\
    S-Weak       &0.03$\pm$0.03&0.002& 0.17  &  0.03$\pm$0.02 & 0.003 & 0.10 &  0.09$\pm$0.03 & 0.021 & 0.17 \\
    Weak           &0.05$\pm$0.05&0.008& 0.28 &  0.04$\pm$0.05 & 0.003 & 0.26  & 0.13$\pm$0.14 & 0.008 & 0.64 \\
    \bottomrule
    \end{tabular}
\end{table}
\subsection{Navier-Stokes equations}\label{subsec:nse}
We use the standard benchmark by~\cite{Turek96}, simulating 2D stationary flow through a channel and across a cylinder. 
The domain is sketched in Figure~\ref{fig:nse_domain}.
The benchmark solves the stationary, incompressible Navier-Stokes equations 
\begin{align}
	- \nu \nabla^2 \boldsymbol{u} (x,y) + (\boldsymbol{u} \cdot \nabla) \boldsymbol{u} (x,y) + \nabla p (x,y) & = 0, & (x,y) & \in \Omega, \\
	\nabla \cdot \boldsymbol{u} (x,y) & = 0, & (x,y) & \in \Omega
\end{align}
with  inflow boundary condition $\boldsymbol{u} (x,y) = \boldsymbol{u}_0 (x,y)$ on $\Gamma_{\text{in}} = \Gamma_1$, no-slip boundary condition  $\boldsymbol{u} (x,y) = 0$ on $\Gamma_{\text{wall}} = \Gamma_2 \cup \Gamma_4 \cup \Gamma_S$ and outflow boundary condition  $\nu \frac{\partial \boldsymbol{u} (x,y)}{\partial \boldsymbol{n}} - \boldsymbol{n} (x,y) p (x,y)  = 0$ on $\Gamma_{\text{out}} = \Gamma_3$.
We set $\boldsymbol{u}_0 (x,y) = [u_0 (y), 0]^T$ and $\nu = 0.001$.
Only PINN-like training is investigated in this case.
Note that we rescale the pressure by $\tilde{p} \coloneqq \frac{p}{\sqrt{\nu}}$ to reduce the ratio between velocity derivative and pressure in the outflow condition, which provided benefits to the accuracy of the exact boundary conditions.
Since no analytical solution is available, we compute a reference solution numerically using FEniCSx~\cite{Baratta23,Scroggs22a,Scroggs22b,Alnaes14}.\footnote{We use a modified version of the code provided in the corresponding tutorial: \url{https://jsdokken.com/dolfinx-tutorial/chapter2/ns_code2.html}.}

We consider the boundary conditions in the form of \eqref{eq:Dirichlet_boundary_condition_system_case} and \eqref{eq:Robin_boundary_condition_system_case}.
For all $\Gamma_i$, we pose the boundary conditions with respect to the same basis vectors $b_i^{(1)} (\boldsymbol{x}) = (1,0,0)^T$, $b_i^{(2)} (\boldsymbol{x}) = (0,1,0)^T$ and $b_i^{(3)} (\boldsymbol{x}) = (0,0,1)^T$.
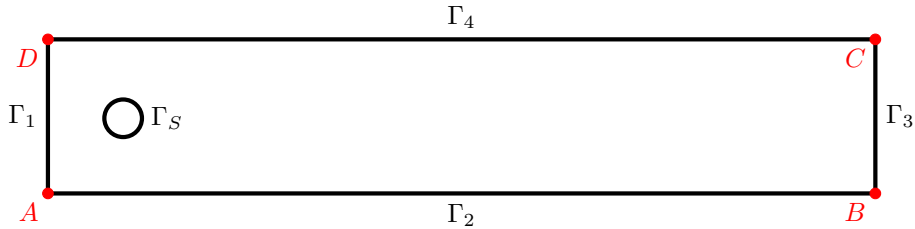
\begin{figure}[th]
	\centering
	\begin{tikzpicture}
		\draw[ultra thick] (0,0)-- node[below] {$\Gamma_{2}$} (11,0);
		\draw[ultra thick] (11,2.05)-- node[right] {$\Gamma_{3}$} (11,0);
		\draw[ultra thick] (11,2.05)-- node[above] {$\Gamma_{4}$} (0,2.05);
		\draw[ultra thick] (0,0)-- node[left] {$\Gamma_{1}$} (0,2.05);
		\draw[ultra thick] (1,1) circle (0.25);
		\draw (1.25,1) circle (0) node[right] {$\Gamma_{S}$};
		
		\filldraw [red] (0,0) circle (2pt) node[below left] {$A$};
		\filldraw [red] (11,0) circle (2pt) node[below left] {$B$};
		\filldraw [red] (11,2.05) circle (2pt) node[below left] {$C$};
		\filldraw [red] (0,2.05) circle (2pt) node[below left] {$D$};
	\end{tikzpicture}
	\caption{Computational domain for the Navier-Stokes test case.}
	\label{fig:nse_domain}
\end{figure}

\paragraph{GLSS.}
The solution structure generated by Algorithms~\ref{alg:intersecting_boundary_segments_system_part_1} and~\ref{alg:intersecting_boundary_segments_system_part_2} are
\begin{multline}
	\begin{pmatrix}
		u(x,y) \\ v(x,y) \\ \tilde{p}(x,y)
	\end{pmatrix}
	= w_1(x,y) \begin{pmatrix}
		u_0 (y) \\ 0 \\ \Psi_1^{\tilde{p}} (x,y)
	\end{pmatrix}
	+ w_2(x,y) \begin{pmatrix}
		0 \\ 0 \\ \Psi_2^{\tilde{p}} (x,y)
	\end{pmatrix} \\
	+ w_3(x,y) \begin{pmatrix}
		\Psi_3^{u} (x,y) - \bar{\phi}_3 (x,y) \nabla \bar{\phi}_3 (x,y) \cdot \nabla \Psi_3^{u}(x,y) + \bar{\phi}_3 (x,y) \Psi_3^{\tilde{p}} (x,y) / \sqrt{\nu} \\ 
		\Psi_3^{v} (x,y) - \bar{\phi}_3 (x,y) \nabla \bar{\phi}_3 (x,y) \cdot \nabla \Psi_3^{v}(x,y) \\ 
		\Psi_3^{\tilde{p}} (x,y)
	\end{pmatrix} \\
	+ w_4(x,y) \begin{pmatrix}
		0 \\ 0 \\ \Psi_4^{\tilde{p}} (x,y)
	\end{pmatrix} 
	+ w_S(x,y) \begin{pmatrix}
		0 \\ 0 \\ \Psi_S^{\tilde{p}} (x,y)
	\end{pmatrix}  \\
	+ \phi_1(x,y) \phi_2(x,y) \phi_3(x,y)^2 \phi_4(x,y) \phi_S(x,y)
	\begin{pmatrix}
		\Psi^{u}(x,y) \\
		\Psi^{v}(x,y) \\
		\Psi^{\tilde{p}}(x,y)
	\end{pmatrix},
\end{multline}
where
\begin{align}
	\Psi_3^{u} (x,y) & =
	\phi_B (x,y) \phi_C (x,y) \bar{\Psi}_3^{u} (x,y), \\
	\Psi_3^{v} (x,y) & =
	\phi_B (x,y) \phi_C (x,y) \bar{\Psi}_3^{v} (x,y), \\
	\Psi_1^{\tilde{p}} (x,y) & = 
	\frac{\phi_D(x,y)}{\phi_A(x,y) + \phi_D(x,y)} \Psi_A^{\tilde{p}}(x,y) + 
	\frac{\phi_A(x,y)}{\phi_A(x,y) + \phi_D(x,y)} \Psi_D^{\tilde{p}}(x,y),\\
	\Psi_2^{\tilde{p}} (x,y) & = 
	\frac{\phi_B(x,y)}{\phi_A(x,y) + \phi_B(x,y)} \Psi_A^{\tilde{p}}(x,y) + 
	\frac{\phi_A(x,y)}{\phi_A(x,y) + \phi_B(x,y)} \Psi_B^{\tilde{p}}(x,y),\\
	\Psi_3^{\tilde{p}} (x,y) & = 
	\frac{\phi_C(x,y)}{\phi_B(x,y) + \phi_C(x,y)} \Psi_B^{\tilde{p}}(x,y) + 
	\frac{\phi_B(x,y)}{\phi_B(x,y) + \phi_C(x,y)} \Psi_C^{\tilde{p}}(x,y),\\
	\Psi_4^{\tilde{p}} (x,y) & = 
	\frac{\phi_D(x,y)}{\phi_C(x,y) + \phi_D(x,y)} \Psi_C^{\tilde{p}}(x,y) + 
	\frac{\phi_C(x,y)}{\phi_C(x,y) + \phi_D(x,y)} \Psi_D^{\tilde{p}}(x,y)
\end{align}
The unknown functions to be learned are $\Psi^u$, $\Psi^v$, $\Psi^{\tilde{p}}$, $\Psi_S^{\tilde{p}}$, $\bar{\Psi}_3^{u}$, $\bar{\Psi}_3^{v}$, $\Psi_A^{\tilde{p}}$, $\Psi_B^{\tilde{p}}$, $\Psi_C^{\tilde{p}}$ and $\Psi_D^{\tilde{p}}$.

\paragraph{OP.}
According to~\eqref{eq:global_solution_structure_op_system_simplified} and~\eqref{eq:ansatz_local_solution_structure_op_system}, the solution structure for the variables $u$ and $v$ is
\begin{multline}
	\begin{pmatrix}
		u(x,y) \\ v(x,y) 
	\end{pmatrix}
	= w_1(x,y) \begin{pmatrix}
		u_0 (y) \\ 0 
	\end{pmatrix}
	+ w_2(x,y) \begin{pmatrix}
		0 \\ 0 
	\end{pmatrix} \\
	+ w_3(x,y) \begin{pmatrix}
		\bar{\Psi}^{u} (2.2,y) - \bar{\phi}_3 (x,y) \bar{\Psi}^{\tilde{p}} (x,y) / \sqrt{\nu} \\ 
		\bar{\Psi}^{v} (2.2,y)  
	\end{pmatrix} \\
	+ w_4(x,y) \begin{pmatrix}
		0 \\ 0 
	\end{pmatrix} 
	+ w_S(x,y) \begin{pmatrix}
		0 \\ 0
	\end{pmatrix}  \\
	+ \phi_1(x,y) \phi_2(x,y) \phi_3(x,y)^2 \phi_4(x,y) \phi_S(x,y)
	\begin{pmatrix}
		\Psi^{u}(x,y) \\
		\Psi^{v}(x,y) 
	\end{pmatrix},
\end{multline}
and for $\tilde{p}$ it would be
\begin{equation}
	\tilde{p} (x,y) = \bar{\Psi}^{\tilde{p}} (x,y) + \phi_1(x,y) \phi_2(x,y) \phi_3(x,y)^2 \phi_4(x,y) \phi_S(x,y) \Psi^{\tilde{p}} (x,y).
\end{equation}
However, we will use a simplified version of the solution structure for $\tilde{p}$ that we define by
\begin{equation}
	\tilde{p} (x,y) = \bar{\Psi}^{\tilde{p}} (x,y) + \phi_3 (x,y) \Psi^{\tilde{p}} (x,y).
\end{equation}
Without doing this, PINO has problems learning the solution properly.
The functions $\bar{\Psi}^{u}$, $\bar{\Psi}^{v}$ and $\bar{\Psi}^{\tilde{p}}$ are defined in a way such that they satisfy their corresponding Dirichlet conditions according to~\eqref{eq:system_op_bar_psi_j_tilde_psi_j}, i.e.
\begin{align}
	\begin{split}
		\bar{\Psi}^{u} (x,y) & = \frac{\phi_S(x,y)}{ \phi_1 (x,y) \phi_2 (x,y) \phi_4 (x,y) + \phi_S(x,y)} u_0(y) \\
		& \quad + \phi_1 (x,y) \phi_2 (x,y) \phi_4 (x,y) \phi_S (x,y) \tilde{\Psi}^{u} (x,y),
	\end{split} \\
	\bar{\Psi}^{v} (x,y) & = \phi_1 (x,y) \phi_2 (x,y) \phi_4 (x,y) \phi_S (x,y) \tilde{\Psi}^{u} (x,y), \\
	\bar{\Psi}^{\tilde{p}} (x,y) & = \tilde{\Psi}^{\tilde{p}} (x,y).
\end{align}
The unknown functions that need to be approximated are $\Psi^{u}$, $\Psi^{v}$, $\Psi^{\tilde{p}}$, $\tilde{\Psi}^{u}$, $\tilde{\Psi}^{v}$ and $\tilde{\Psi}^{\tilde{p}}$.

Figure~\ref{fig:nse_losses} shows the training losses in the upper figure and the errors in the velocity components $u$ and $v$ and the pressure $p$ against the numerically computed reference solution.
After 4000 epochs, losses for GLSS, OP and weak boundary conditions are similar but the loss for semi-weak remains somewhat higher.
In terms of errors, we again see a clear benefit in terms of accuracy from GLSS and OP as they outperform weak and semi-weak boundary conditions in all three solution components.
\begin{figure}[t]
  \centering
  \includegraphics[width=0.9\textwidth]{./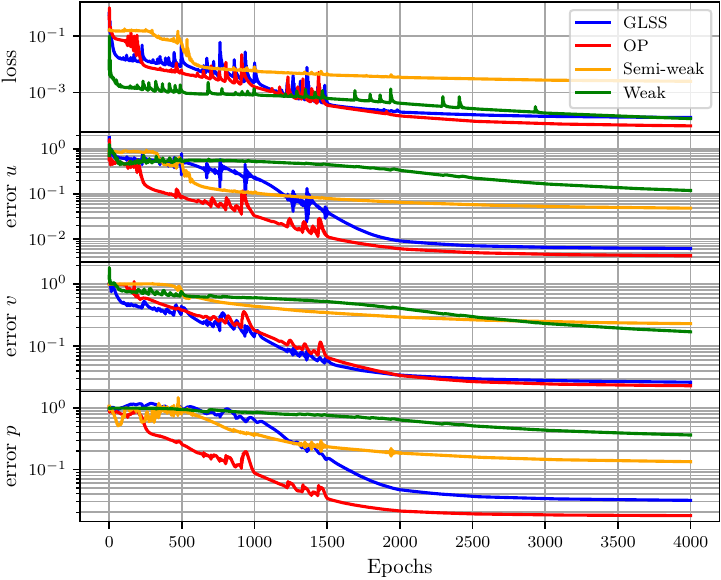}  
  \caption{Training loss (upper) and validation error in the $u$ (upper middle) and $v$ (lower middle) velocity component and pressure $p$ (lower) for the Navier-Stokes equations.}
  \label{fig:nse_losses}
\end{figure}

To further assess accuracy we consider three practically relevant diagnostic quantities:  pressure difference,  drag coefficient and lift coefficient, see~\cite{Turek96} for their definition.
Table~\ref{tab:nse_diagnostic} shows the values computed from the PINO using the four different ways to enforce boundary conditions and, in brackets, the relative error against the reference values by~\cite{nabh98}.
We again see a noticeable increase in accuracy from GLSS and OP over weak or semi-weak boundary conditions.
Pressure difference and drag coefficient are predicted with high accuracy.
While relative errors for the lift coefficient are large, they are still orders of magnitude smaller than for the weak or semi-weak approach.
\begin{table}[h]
		\centering
		\caption{Physically important parameters computed from the Navier-Stokes solution. The reference values are provided by~\cite{nabh98} with 9 digit accuracy and we rounded them to 4 digits. The relative error against those reference values is shown in brackets.}  	
	\label{tab:nse_diagnostic}
    \begin{tabular}{llll}
 	\toprule
    & Pressure difference     & Drag coefficient     & Lift coefficient \\ \midrule
    GLSS      & 0.1150 (\phantom{0}2.1\%) & 5.5336 (\phantom{0}0.8\%) & -0.0058 (\phantom{0}155\%) \\
 	OP        & 0.1145 (\phantom{0}2.6\%) & 5.5366 (\phantom{0}0.8\%) & \phantom{-}0.0024 (\phantom{00}77\%)\\
 	Semi-weak & 0.0678 (42.3\%)           & 3.8221 (31.5\%)           & -0.3759 (3646\%)   \\
 	Weak      & 0.0902 (23.2\%).          & 4.6633 (16.4\%)           & \phantom{-}0.3849 (3531\%) \\ \midrule
    Reference values & 0.1175 & 5.5795 & \phantom{-}0.0106 \\
  	\bottomrule
  	\end{tabular}
\end{table}

\section{Conclusion}\label{sec:conclusion}
We have introduced solution structures that enforces boundary conditions for training physics-informed neural operators, both for scalar PDEs as well as systems. By relaxing assumptions, we show the construction of $C^1$ continuous ansatz functions even on boundaries that are only $C^0$ globally. Numerical examples demonstrate the improved performance of the two new approaches, GLSS and OP. The two new approaches come with some limitations and drawbacks. 
First, they increase complexity of implementation compared to weakly enforced boundary conditions, in particular if the number of $C^1$-segments that form the boundary is high.
Second, while the size of the network as well as training times change only marginally, we do see an increase of inference times of up to 30\%. This, however, can be improved by an optimized implementation. Third, the OP approach only works for boundaries that can be decomposed into segments where each of these segments lies in a hyperplane.
Finally, the approaches are only tested for $\Omega \subset \mathbb{R}^2$, although generalization to 3D should be possible.


\appendix
\section{Best- and worst-case solutions for the Darcy flow problem}\label{sec:best-worst-darcy}
Figures~\ref{fig:darcy-sol1}--\ref{fig:darcy-sol4} show the solutions produced by the different boundary conditions for the parameter pairs that produced the best- and worst-case errors in Table~\ref{operator_training_highest_lowest_median}.
In addition, the middles figures show the solution that provides the median error for the 100 evaluation pairs.
\begin{figure}[p!]
	\centering
	\includegraphics[width=0.9\textwidth]{./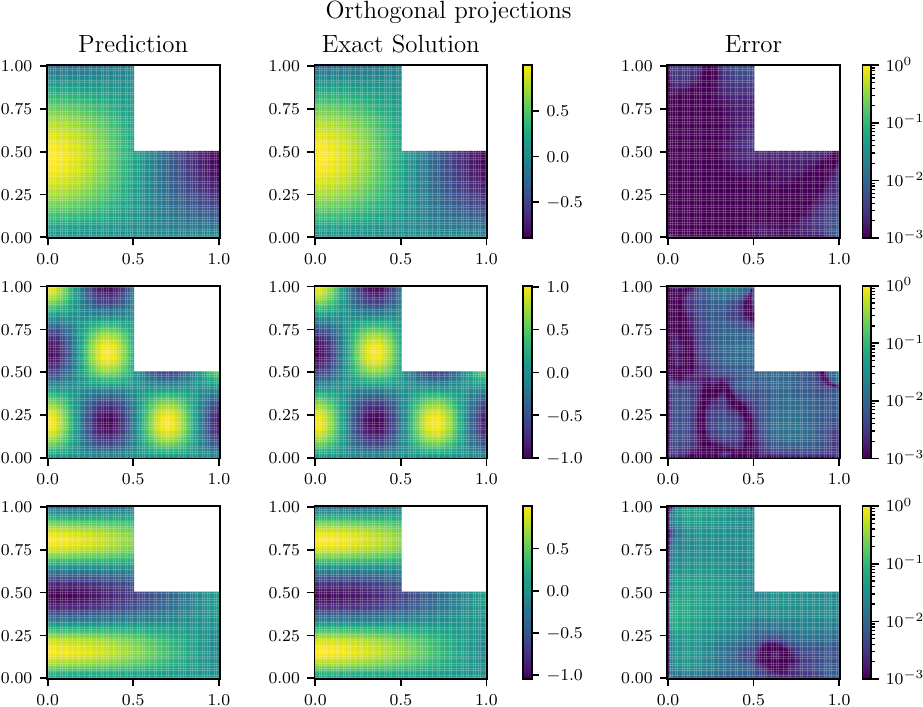}
	\caption{Best-case (upper), median (middle) and worst-case (lower) solution for the Darcy flow with OP boundary conditions.}
	\label{fig:darcy-sol1}
\end{figure}
\begin{figure}[p!]
	\centering
	\includegraphics[width=0.9\textwidth]{./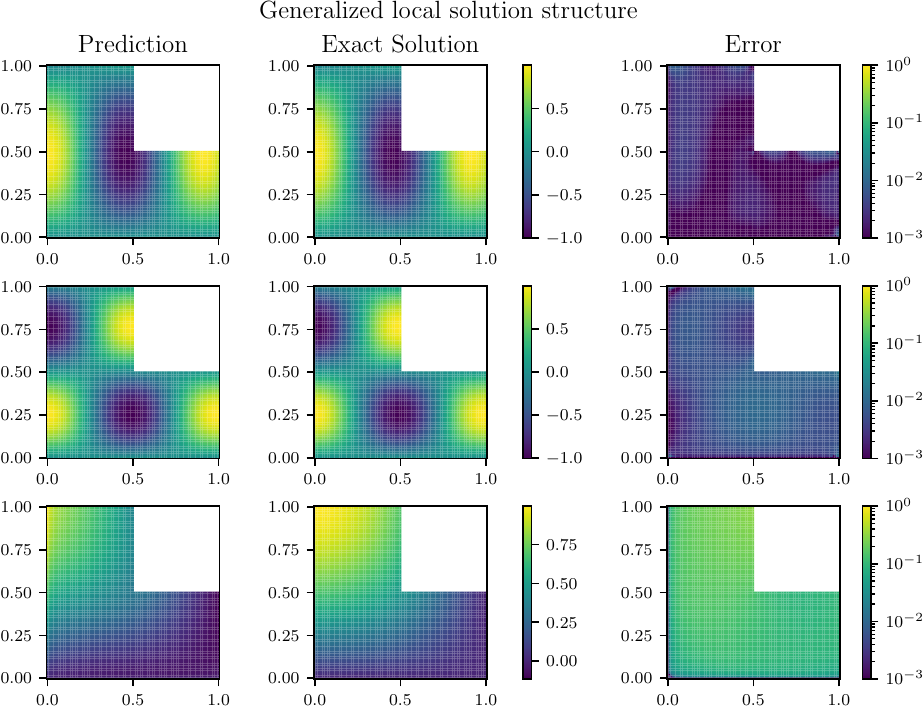}
	\caption{Best-case (upper), median (middle) and worst-case (lower) solution for the Darcy flow with GLSS boundary conditions.}
		\label{fig:darcy-sol2}
\end{figure}
\begin{figure}[p!]
	\centering
	\includegraphics[width=0.9\textwidth]{./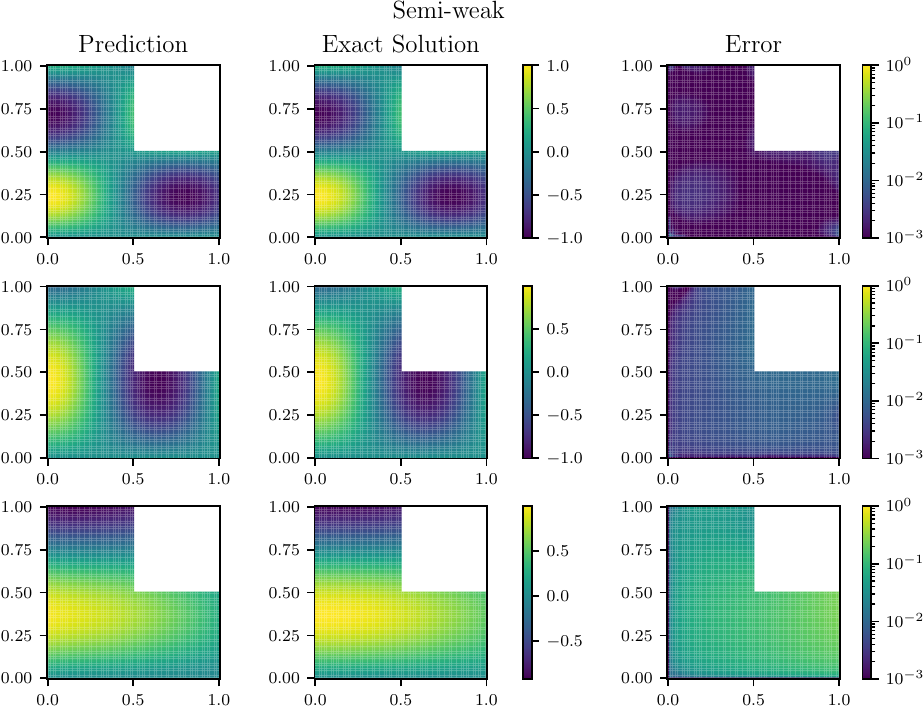}
	\caption{Best-case (upper), median (middle) and worst-case (lower) solution for the Darcy flow with semi-weak boundary conditions.}
		\label{fig:darcy-sol3}
\end{figure}
\begin{figure}[p!]
	\centering
	\includegraphics[width=0.9\textwidth]{./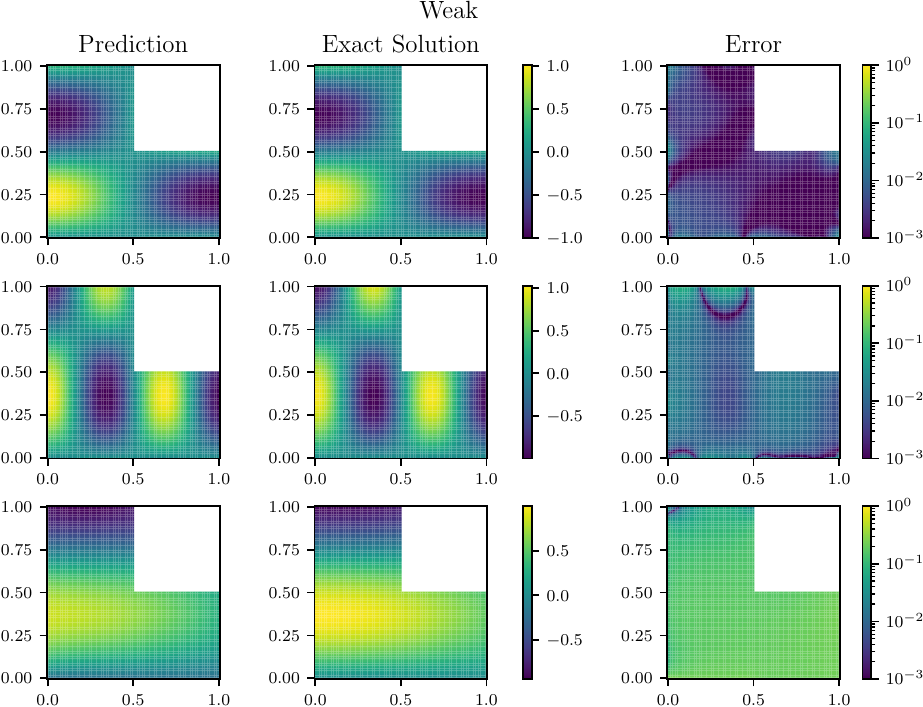}
	\caption{Best-case (upper), median (middle) and worst-case (lower) solution for the Darcy flow with weak boundary conditions.}
		\label{fig:darcy-sol4}
\end{figure}

\section{Solutions for the Navier-Stokes equations}
Figure~\ref{fig:nse-sol} shows the solutions to the Navier-Stokes benchmark problem produced by the PINN-style training of the PINO using the four different ways to prescribe boundary conditions as well as the numerical reference at the top.
While all solutions agree qualitatively, for weak and semi-weak boundaries, the peak in pressure right in front of the obstacle is too weak and the velocity is underestimated.
\begin{figure}[th]
  \centering
  \includegraphics[width=\textwidth]{./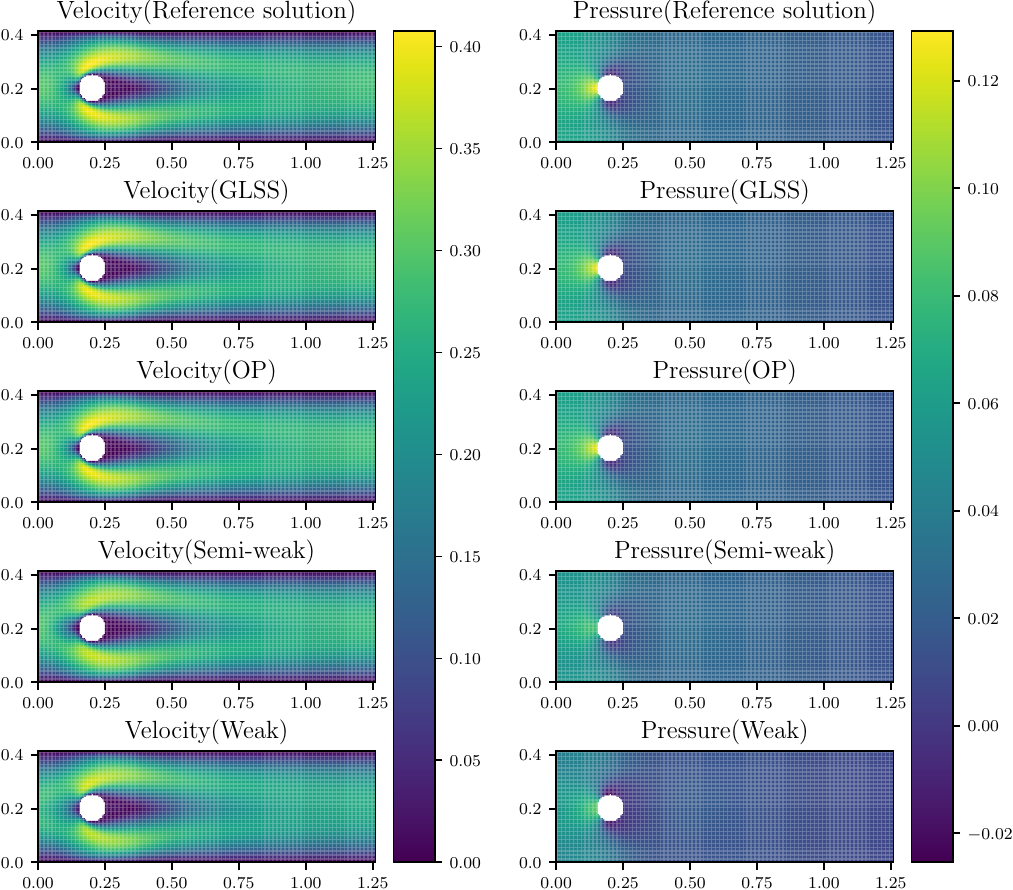}  
  \caption{Velocity (left) and pressure (right) produced by the FNO using the four different ways to prescribe boundary conditions. The uppermost figure shows the numerical reference solution.}
  \label{fig:nse-sol}
\end{figure}

\bibliographystyle{siamplain}
\bibliography{references}
\end{document}